\newtheorem{theorem}{\textsf{Theorem}}
\newtheorem{lemma}{\textsf{Lemma}}
\newtheorem{proposition}{\textsf{Proposition}}
\newenvironment{proof}[1][\textsf{Proof. }]{\textbf{#1}}{$\square$}
\newenvironment{definition}[1][\textsf{Definition. }]{\textbf{#1}}{}
\newenvironment{example}[1][\textsf{Example. }]{\textbf{#1}}{$\lozenge$}
\newenvironment{note}[1][\textsf{Note. }]{\textbf{#1}}{}
\newenvironment{remark}[1][\textsf{Remark. }]{\textbf{#1}}{}
\def\text{\hbox} \def\newpage{\vfill\break}
\begin{document}

\title{
\date{}
{
\large \textsf{\textbf{Deformation of finite-volume hyperbolic Coxeter polyhedra, limiting growth rates and Pisot numbers}}
}
}
\author{\small Alexander Kolpakov\footnote{supported by the Schweizerischer Nationalfonds SNF no.~200020-121506/1 and no.~200021-131967/1}}
\maketitle

\begin{abstract}\noindent

A connection between real poles of the growth functions for
Coxeter groups acting on hyperbolic space of dimensions three and
greater and algebraic integers is investigated. In particular, a certain 
geometric convergence of fundamental domains for cocompact hyperbolic Coxeter
groups with finite-volume limiting polyhedron provides a
relation between Salem numbers and Pisot numbers. 
Several examples conclude this work.

\medskip
{\textsf{\textbf{Key words}}: hyperbolic Coxeter group, Coxeter polyhedron, growth function, Pisot number.}
\end{abstract}

%----------------------------------------------- Document body --------------------------------------------------------------
\parindent=0pt

\section{Introduction}

Since the work of  R.~Steinberg \cite{Steinberg}, growth series for Coxeter groups are
known to be series expansions of certain rational functions. By considering the growth function of a hyperbolic Coxeter
group, being a discrete group generated by a finite set $S$ of reflections in hyperplanes of 
hyperbolic space~$\mathbb{H}^n$, J.W.~Cannon \cite{Cannon1, Cannon2}, P.~Wagreich \cite{Wagreich}, W.~Parry \cite{Parry} and W.~Floyd \cite{Floyd} in the beginning of the 1980's discovered a connection between the real poles of the corresponding growth function and
algebraic integers such as Salem numbers and Pisot numbers for $n=2,3$. In particular, there is a kind of geometric convergence for the fundamental domains of cocompact planar hyperbolic
Coxeter groups giving a geometric interpretation of the convergence of Salem numbers 
to Pisot numbers, the behaviour discovered by R.~Salem~\cite{Salem} much earlier in 1944. This paper provides a generalisation of the result by W.~Floyd \cite{Floyd} to the three-dimensional case (c.f. Theorem~\ref{alg_integers}).

\smallskip
{\bf Acknowledgement.} This work is a part of the author's PhD thesis project supervised by Prof. Ruth Kellerhals. The author is grateful to the Fields Institute for their hospitality and support during the fall 2011 Thematic Program on Discrete Geometry and Applications. The author would like to thank the referees for their valuable comments and suggestions.

\section{Preliminaries}

\textbf{2.1} Let $G$ be a finitely generated group with generating set $S$ providing the pair $(G, S)$. In the following, we often write $G$ for $(G,S)$ assuming $S$ is fixed.
Define the word-norm $\|\cdot\|:G\rightarrow \mathbb{N}$ on $G$ with respect to $S$ by
$\|g\| = \min\,\{n\, |\, g\,\, \mbox{is a product of}\, n\, \mbox{elements from}\, S\cup S^{-1}\}$. Denote by $a_k$ the number of elements in $G$ of word-norm $k$, and put $a_0=1$
as usually done for the empty word. 

\smallskip\begin{definition}
{\it The growth series of the group $G=(G,S)$ with respect to its generating set $S$ is $f(t) := f_{S}(t) = \sum_{k=0}^{\infty} a_k t^k$. }
\end{definition}\smallskip

The series $f(t)$ has positive radius of convergence since $a_k \leq (2\,|S|)^k$. The reciprocal of the radius of convergence is called \textit{the growth rate} $\tau$ of $G$. If $G$ is a Coxeter group with its Coxeter generating set $S$ (c.f. \cite{Humphreys}), then $f(t)$ is a rational function by~\cite{Bourbaki, Steinberg}, called \textit{the growth function} of $G$.

Let $\mathcal{P} \subset \mathbb{H}^n$, $n\geq 2,$ be a finite-volume hyperbolic polyhedron all of whose dihedral angles are submultiples of $\pi$. Such a polyhedron is called a {\it Coxeter polyhedron} and 
gives rise to a discrete subgroup $G = G(\mathcal{P})$ of $\mathrm{Isom}(\mathbb H^n)$ generated by the set $S$
of reflections in the finitely many bounding hyperplanes of $\mathcal{P}$. We call $G = G(\mathcal{P})$ a hyperbolic Coxeter group.
In the following we will study the growth function of $G = (G,S)=G(\mathcal{P})$.

\smallskip
The most important tool in the study of the growth function of a Coxeter group is Steinberg's formula.
\begin{theorem}[R.~Steinberg, \cite{Steinberg}]\label{Steinberg_formula}
Let $G$ be a Coxeter group with generating set $S$. Then
\begin{equation}\label{eqSteinberg}
\frac{1}{f_{S}(t^{-1})} = \sum_{T\in \mathcal{F}}\,\frac{(-1)^{|T|}}{f_T(t)},
\end{equation}
where $\mathcal{F} = \{ T \subseteq S\, |\, \mbox{the subgroup of}\,\, G\, \mbox{generated by}\,\, T\,\, \mbox{is finite} \}$.
\end{theorem}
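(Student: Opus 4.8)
The plan is to read \eqref{eqSteinberg} as an identity of rational functions and to prove it in three moves: reduce the alternating sum on the right to a single generating function over $G$ by a parabolic coset decomposition, drop the finiteness constraint on the index set by a descent-set argument, and finally extract the reciprocal $f_S(t^{-1})$ from the length-reversing symmetry carried by the longest elements of the finite parabolic subgroups. Throughout I write $\ell(\cdot)=\|\cdot\|$ and, for $w\in G$, I use the right descent set $D(w)=\{s\in S\,:\,\ell(ws)<\ell(w)\}$. Two classical facts will carry the argument. First, for any $T\subseteq S$ every $w\in G$ factors uniquely as $w=u\,v$ with $v\in G_T:=\langle T\rangle$ and $u$ a minimal-length representative of the coset $wG_T$, with $\ell(w)=\ell(u)+\ell(v)$; when $G_T$ is finite this yields $f_S(t)=f^T(t)\,f_T(t)$, where $f^T$ enumerates the minimal representatives. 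Second, for every $w\in G$ the descent set $D(w)$ belongs to $\mathcal F$: in the factorization $w=u\,v$ with $T=D(w)$, the descent condition forces $v$ to have full right descent set in $G_{D(w)}$, that is, $v$ is a longest element, which forces $G_{D(w)}$ to be finite and $w$ to be the longest element of its coset $wG_{D(w)}$ (see \cite{Bourbaki,Humphreys}).

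Starting from the right-hand side, for $T\in\mathcal F$ let $w_0(T)$ be the longest element of $G_T$ and put $N_T=\ell(w_0(T))$. The elements $w$ with $T\subseteq D(w)$ are exactly the maximal-length representatives of the cosets of $G_T$, namely the $u\,w_0(T)$ with $u$ minimal, so their generating function is $t^{N_T}f^T(t)=t^{N_T}f_S(t)/f_T(t)$. Hence $1/f_T(t)=t^{-N_T}f_S(t)^{-1}\sum_{w:\,T\subseteq D(w)}t^{\ell(w)}$, and summing over $\mathcal F$ and interchanging the order of summation gives
\[
\sum_{T\in\mathcal F}\frac{(-1)^{|T|}}{f_T(t)}
=\frac{1}{f_S(t)}\sum_{w\in G}t^{\ell(w)}\sum_{T\subseteq D(w)}(-1)^{|T|}\,t^{-N_T},
\]
where the inner sum now ranges over \emph{all} subsets of the finite-type set $D(w)$, the constraint $T\in\mathcal F$ being automatic by the second fact above (a subset of $D(w)$ generates a parabolic subgroup of the finite group $G_{D(w)}$).

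It therefore remains to prove the purely combinatorial identity
\[
\sum_{w\in G}t^{\ell(w)}\sum_{T\subseteq D(w)}(-1)^{|T|}\,t^{-N_T}=\frac{f_S(t)}{f_S(t^{-1})}.
\]
This is the step I expect to be the main obstacle, and it is where the reciprocal $t^{-1}$ genuinely enters. The mechanism is the palindromy $f_T(t)=t^{N_T}f_T(t^{-1})$ of each finite $f_T$, which comes from the involution $v\mapsto w_0(T)\,v$ reversing length on $G_T$; this is what converts the weights $t^{-N_T}$ into contributions evaluated at $t^{-1}$. I would establish the identity by a sign-reversing involution on the triples $(v,w,T)$ produced after multiplying through by $f_S(t^{-1})=\sum_{v}t^{-\ell(v)}$, the surviving terms being matched bijectively with $G$; the delicate bookkeeping is that the cancellation is governed precisely by which descent sets are of finite type. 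As a consistency check, for the infinite dihedral group every $w\neq e$ has $D(w)$ a single generator and contributes $t^{\ell(w)}(1-t^{-1})$ while $e$ contributes $1$, so the identity collapses to $f_S(t)/f_S(t^{-1})=-1$, in agreement with $f_S(t)=(1+t)/(1-t)$.

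Finally, all manipulations are legitimate because each series converges on a punctured disc about the origin (recall $a_k\le(2|S|)^k$), so the equalities hold first as Laurent expansions and then, by \cite{Steinberg}, as rational functions; moreover the case of finite $G$ is exactly the palindromy above, which can serve as the base of an induction on $|S|$ should one prefer to prove the combinatorial identity recursively.
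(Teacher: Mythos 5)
There is no proof in the paper to compare against: Theorem~\ref{Steinberg_formula} is quoted from \cite{Steinberg} without proof, so your argument has to stand on its own --- and it does not, for a reason you flag yourself. Everything up to your displayed rewriting is correct: the factorization $f_S(t)=f^T(t)f_T(t)$, the identification of $\{w : T\subseteq D(w)\}$ with the maximal representatives $u\,w_0(T)$ (valid precisely because a full descent set inside $G_T$ forces $G_T$ to be finite), the automatic finite type of subsets of $D(w)$, and hence
\[
\sum_{T\in\mathcal{F}}\frac{(-1)^{|T|}}{f_T(t)}=\frac{1}{f_S(t)}\sum_{w\in G}t^{\ell(w)}\sum_{T\subseteq D(w)}(-1)^{|T|}\,t^{-N_T}.
\]
But the ``purely combinatorial identity'' you then set aside is not a reduction of the problem: dividing it by $f_S(t)$ and invoking this very display returns, verbatim, the identity (\ref{eqSteinberg}) you set out to prove. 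All of the content of the theorem --- why an alternating sum restricted to finite-type subsets reproduces the reciprocal series $1/f_S(t^{-1})$ --- is concentrated in that identity, and at exactly that point the argument stops being a proof: the sign-reversing involution on triples $(v,w,T)$ is announced but never constructed, and the palindromy $f_T(t)=t^{N_T}f_T(t^{-1})$ by itself does not supply the cancellation. The infinite dihedral computation is a consistency check, not a mechanism.

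To see where the real work is needed, compare with the classical arguments. One first proves the \emph{unrestricted} identity
\[
\sum_{T\subseteq S}(-1)^{|T|}\,\frac{f_S(t)}{f_T(t)}\;=\;\sum_{w\in G}t^{\ell(w)}\sum_{T\subseteq S\setminus D(w)}(-1)^{|T|},
\]
which makes sense for every $T\subseteq S$ because $f_S(t)/f_T(t)=f^T(t)$ holds as an identity of formal power series whether or not $G_T$ is finite; the inner sum vanishes unless $D(w)=S$, and by your second fact an element with $D(w)=S$ exists only when $G$ is finite, where it is the unique $w_0$. So the left-hand side equals $t^{\ell(w_0)}$ for finite $G$ and $0$ for infinite $G$. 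Dividing by $f_S(t)$ and using palindromy of $f_S$ itself settles the finite case --- note that your closing remark that the finite case ``is exactly the palindromy'' already needs this inclusion--exclusion as well. In the infinite case this yields only $\sum_{T\subseteq S}(-1)^{|T|}/f_T(t)=0$, and the genuinely hard step, carried out in \cite{Steinberg} by induction on $|S|$, is to trade the terms with $T\notin\mathcal{F}$ for $1/f_S(t^{-1})$ by applying the induction hypothesis to the infinite proper parabolic subgroups. Your proposal contains neither this induction nor any explicit involution replacing it; as written, the theorem has been reformulated, not proved.
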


%%%%%%%%%%%
\smallskip
Consider the group $G = G(\mathcal{P})$ generated by the reflections in the bounding hyperplanes of a hyperbolic Coxeter polyhedron $\mathcal{P}$. 
Denote by $\Omega_k(\mathcal{P})$, $0 \leq k \leq n-1$ the set of all $k$-dimensional faces of $\mathcal{P}$.
Elements in $\Omega_{k}(\mathcal{P})$ for $k=0$, $1$ and $n-1$
are called vertices, edges and facets (or faces, in case $n=3$) of $\mathcal{P}$, respectively.

Observe that all finite subgroups of $G$ are stabilisers of elements $\,F\in \Omega_{k}(\mathcal{P})$ for some $k \geq 0$. By the result of Milnor~\cite{MilnorGrowth}, the growth rate of a hyperbolic group is strictly greater than $1$. Hence, the growth rate of the reflection group $G(\mathcal{P})$ is $\tau > 1$, if $\mathcal{P}$ is compact, and the growth function $f_S(t)$ has a pole in $(0, 1)$.

\medskip
\textbf{2.2} In the context of growth rates we shall look at particular classes of algebraic integers.

\smallskip
\begin{definition}
{\it A Salem number is a real algebraic integer $\alpha > 1$ such that $\alpha^{-1}$ is an algebraic conjugate of $\alpha$ and all the other algebraic conjugates lie on the unit circle of the complex plane. Its minimal polynomial over $\mathbb{Z}$ is called a Salem polynomial. }
\end{definition}

\smallskip
\begin{definition}
{\it A Pisot-Vijayaraghavan number, or a Pisot number for short, is a real algebraic integer $\beta > 1$ such that all the algebraic conjugates of $\beta$ are in the open unit disc of the complex plane. The corresponding minimal polynomial over $\mathbb{Z}$ is called a Pisot polynomial.}
\end{definition}

\smallskip
Recall that a polynomial $P(t)$ is reciprocal if $\tilde{P}(t) = t^{\mathrm{deg}\,P} P(t^{-1})$ equals $P(t)$, and anti-reciprocal if $\tilde{P}(t)$ equals $-P(t)$. The polynomial $\tilde{P}(t)$ itself is called \textit{the reciprocal polynomial} of $P(t)$. 

\medskip
The following result is very useful in order to detect Pisot polynomials.
\begin{lemma}[W.~Floyd, \cite{Floyd}]\label{pisot}
Let $P(t)$ be a monic polynomial with integer coefficients such that $P(0) \neq 0$, $P(1) < 0$, and $P(t)$ is not reciprocal. Let $\tilde{P}(t)$ be the reciprocal polynomial for $P(t)$. Suppose that for every sufficiently large integer $m$, $\frac{t^m P(t) - \tilde{P}(t)}{t - 1}$ is a product of cyclotomic polynomials and a Salem polynomial. Then $P(t)$ is a product of cyclotomic polynomials and a Pisot polynomial.
\end{lemma}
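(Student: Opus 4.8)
The plan is to run, in reverse, the classical construction by which Salem numbers appear as limits of quantities built from a fixed Pisot number. Write $d = \deg P$ and, for each integer $m$, set
\[
N_m(t) = t^m P(t) - \tilde{P}(t), \qquad Q_m(t) = \frac{N_m(t)}{t-1}.
\]
Since $\tilde{P}(1) = 1^d P(1) = P(1)$, the factor $t-1$ genuinely divides $N_m$, so $Q_m$ is a polynomial of degree $m + d - 1$; a short computation using $\tilde{\tilde{P}} = P$ shows moreover that $N_m$ is anti-reciprocal, so its roots are symmetric under $t \mapsto t^{-1}$ and $Q_m$ is reciprocal, consistent with the hypothesis that $Q_m$ is a product of cyclotomic polynomials and a Salem polynomial. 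I first record that $P$ has a real root $\beta > 1$: indeed $P$ is monic, so $P(t) \to +\infty$ as $t \to +\infty$, while $P(1) < 0$, and the intermediate value theorem supplies $\beta \in (1, \infty)$. The goal is then to prove that $\beta$ is the \emph{only} root of $P$ outside the closed unit disc and that it is a Pisot number.

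Next I would pin down the location of the roots of $P$ by a limiting Rouch\'e--Hurwitz argument, and I expect this analytic transfer of the root count to be the main obstacle. By hypothesis each $Q_m$, for $m$ large, has exactly one root outside the closed unit disc, namely its Salem number $\alpha_m > 1$; adding back the factor $t-1$ (which contributes only the root $t=1$), the polynomial $N_m$ has exactly one root in $|t| > 1$, counted with multiplicity. Writing $N_m(t) = t^m\bigl(P(t) - t^{-m}\tilde{P}(t)\bigr)$, the functions $P(t) - t^{-m}\tilde{P}(t)$ converge to $P(t)$ uniformly on $\{\,1 + \delta \le |t| \le R\,\}$ as $m \to \infty$. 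Choosing $R$ large and $\delta$ small so that the circles $|t| = 1+\delta$ and $|t| = R$ avoid the roots of $P$, Hurwitz's theorem forces, for all large $m$, the number of roots of $N_m$ in the annulus $1 + \delta < |t| < R$ to equal the number $k$ of roots of $P$ there. Since $N_m$ has only a single root outside the closed disc, $k \le 1$, while the real root $\beta$ gives $k \ge 1$; hence $k = 1$, the unique exterior root of $P$ is $\beta$, it is simple, and $\alpha_m \to \beta$. The delicate bookkeeping here is keeping that single exterior root away from the unit circle and guaranteeing simplicity, and this is exactly where the full strength of the hypothesis (one exterior root for \emph{every} large $m$) is used.

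With the root geometry understood, the conclusion becomes algebra. Let $M$ be the minimal polynomial of $\beta$ over $\mathbb{Z}$, so that $M \mid P$ and, by Gauss's lemma, $R := P/M$ is monic with integer coefficients. Because $\beta$ is the only root of $P$ outside the closed unit disc and is simple, every root of $R$ lies in the closed disc; since also $R(0) \ne 0$, Kronecker's theorem shows that each irreducible factor of $R$ is cyclotomic, i.e. $R$ is a product of cyclotomic polynomials. Moreover $P(1) < 0$ gives $(t-1) \nmid P$, hence $(t-1) \nmid R$, so $R$ is a product of the $\Phi_n$ with $n \ge 2$, each of which is reciprocal, whence $R$ is reciprocal. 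It remains to see that $M$ is a Pisot polynomial, i.e. that no conjugate of $\beta$ lies on the unit circle. From $P = R\,M$ with $P$ not reciprocal and $R$ reciprocal, $M$ cannot be reciprocal. If $M$ had a root $\gamma$ with $|\gamma| = 1$, then $\bar\gamma = \gamma^{-1}$ would be a common root of $M$ and $\tilde{M}$; irreducibility of $M$ then gives $\tilde{M} = c\,M$ for a constant $c$, and comparing leading and constant coefficients yields $c = M(0)$ and $M(0)^2 = 1$, so $M(0) = \pm 1$. The value $M(0) = 1$ would make $M$ reciprocal, which is excluded; the value $M(0) = -1$ would make $M$ anti-reciprocal, whence $M(1) = -M(1) = 0$, forcing the irreducible $M$ to be $t-1$ and $\beta = 1$, contradicting $\beta > 1$. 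Thus $M$ has no root on the unit circle, all conjugates of $\beta$ lie strictly inside it, $M$ is a Pisot polynomial, and $P = R\,M$ exhibits $P$ as a product of cyclotomic polynomials and a Pisot polynomial.
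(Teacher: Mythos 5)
The paper states Lemma~\ref{pisot} without proof, importing it directly from Floyd's article \cite{Floyd}, so there is no in-paper argument to compare against; measured against Floyd's original proof, your reconstruction is correct and follows essentially the same route. The three steps you give --- locating a real root $\beta>1$ via $P(1)<0$ and monicity, transferring the ``exactly one exterior zero'' property from $t^mP(t)-\tilde{P}(t)$ to $P$ by uniform convergence of $P(t)-t^{-m}\tilde{P}(t)$ on an annulus plus Hurwitz's theorem (with $\delta$ and $R$ chosen so that \emph{all} exterior roots of $P$ lie strictly between the two circles, which is what your phrasing should be read as saying), and then the algebraic endgame (Gauss's lemma, Kronecker's theorem for the cofactor $R=P/M$, exclusion of $t-1$ via $P(1)<0$, and the $M(0)=\pm1$ dichotomy ruling out conjugates of $\beta$ on the unit circle) --- are exactly the ingredients of the classical Salem--Floyd argument, and each is carried out soundly.
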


\medskip
The convergence of Salem numbers to Pisot numbers was first discovered and analysed in \cite{Salem}. A geometrical relation between these algebraic integers comes into view as follows. Growth functions of planar hyperbolic Coxeter groups were calculated explicitly in \cite[Section~2]{Floyd}. The main result of \cite{Floyd}  states that the growth rate $\tau$ of a cocompact hyperbolic Coxeter group -- being a Salem number by \cite{Parry} -- converges from below to the growth rate of a finite covolume hyperbolic Coxeter group under a certain deformation process
performed on the corresponding fundamental domains. More precisely, one deforms the given compact Coxeter polygon by decreasing one of its angles $\pi/m$. This process results in pushing one of its vertices toward the ideal boundary $\partial\mathbb H^2$ in such a way that every polygon under this process provides a cocompact hyperbolic Coxeter group.
Therefore, a sequence of Salem numbers $\alpha_m$ given by the respective growth rates $\tau_m$ arises. The limiting Coxeter polygon is of finite area having exactly
one ideal vertex, and the growth rate $\tau_{\infty}$ of the corresponding Coxeter group equals the limit of $\beta = \lim_{m\rightarrow \infty} \alpha_m$ and is a Pisot number.

\medskip
\textbf{2.3} In this work, we study analogous phenomena in the case of spatial hyperbolic Coxeter groups. The next result will play an essential role.

\begin{theorem}[W.~Parry, \cite{Parry}]\label{GF_representation}
Let $\mathcal{P}\subset\mathbb H^3$ be a compact Coxeter polyhedron. The growth function $f(t)$ of $G(\mathcal{P})$ satisfies the identity
\begin{equation}\label{eqParry1}
\frac{1}{f(t^{-1})} = \frac{t-1}{t+1} + \sum_{v \in \Omega_0(\mathcal{P})} g_v(t),
\end{equation}
where
\begin{equation}\label{eqParry2}
g_v(t) = \frac{t(1-t)}{2}\frac{(t^{m_1}-1)(t^{m_2}-1)(t^{m_3}-1)}{(t^{m_1+1}-1)(t^{m_2+1}-1)(t^{m_3+1}-1)}
\end{equation}
is a function associated with each vertex $v\in \Omega_0(\mathcal{P})$; the integers $m_1$, $m_2$, $m_3$ are the Coxeter exponents\footnote{for the definition see, e.g. \cite[Section~3.16, p.~75]{Humphreys}} of the finite Coxeter group $Stab(v)$ (see Table~\ref{tabular_for_GF_representation}). Furthermore, the growth rate $\tau$ of $G(\mathcal{P})$ is a Salem number.
\end{theorem}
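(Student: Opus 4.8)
The plan is to read off \eqref{eqParry1}--\eqref{eqParry2} from Steinberg's identity \eqref{eqSteinberg} by identifying the family $\mathcal F$ geometrically, and to extract the Salem property from a reciprocity symmetry of the resulting expression. First I would describe $\mathcal F$. A finite subgroup of $\mathrm{Isom}(\mathbb H^3)$ fixes a point and acts on the tangent $2$-sphere, so a reflection subgroup generated by $T\subseteq S$ can have rank at most $3$; thus $|T|\le 3$, and, as recalled in Section~2.1, the groups $\langle T\rangle$ are precisely the face stabilisers. Hence $\mathcal F$ consists of $\emptyset$ (with $f_\emptyset=1$), the facets (each a single reflection, $f_T=1+t$), the edges (two facets meeting at angle $\pi/m_e$, giving $I_2(m_e)$), and the vertices. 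Compactness forces $\mathcal P$ to be simple: each vertex $v$ meets exactly three facets and three edges, and $\mathrm{Stab}(v)$ is a rank-$3$ spherical Coxeter group with exponents $m_1,m_2,m_3$ and $f_{\mathrm{Stab}(v)}(t)=\prod_i (t^{m_i+1}-1)/(t-1)$.

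Next I would convert \eqref{eqSteinberg} into a sum indexed by vertices alone. Writing $V,E,F$ for the face counts, simplicity gives $3V=2E$ and Euler's relation $V-E+F=2$, whence $F-2=V/2$. Splitting $\tfrac{t-1}{t+1}=1-\tfrac{2}{t+1}$, the constant term and the $F$ facet terms $-1/(1+t)$ combine as $\tfrac{t-1}{t+1}-\sum_v \tfrac{1}{2(1+t)}$, while each edge term is shared equally between its two endpoints. This rewrites the right-hand side of \eqref{eqSteinberg} as $\tfrac{t-1}{t+1}+\sum_v\bigl(-\tfrac{1}{2(1+t)}+\tfrac12\sum_{e\ni v}1/f_{I_2(m_e)}-1/f_{\mathrm{Stab}(v)}\bigr)$, so it remains to check the purely local identity that the bracket equals $g_v(t)$ of \eqref{eqParry2}. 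This is a finite verification over the rank-$3$ spherical groups $A_1\times I_2(m)$, $A_3$, $B_3$, $H_3$, matching the three dihedral labels at $v$ to the exponents; for example for $A_1^3$ both sides reduce to $-t(t-1)/\bigl(2(t+1)^3\bigr)$.

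For the Salem assertion the key observation is that a short computation gives $g_v(t^{-1})=-g_v(t)$, while $\tfrac{t-1}{t+1}$ changes sign under $t\mapsto t^{-1}$. Substituting $t\mapsto t^{-1}$ into \eqref{eqParry1} therefore yields the reciprocity $f(t^{-1})=-f(t)$, so the poles of $f$ occur in pairs $\lambda,\lambda^{-1}$. Writing the right-hand side of \eqref{eqParry1} as $N(t)/D(t)$, the poles of $f$ are the reciprocals of the roots of $N$, and $\tau$ is the root of largest modulus. By Milnor's theorem $\tau>1$, and since the $a_k$ are non-negative, Pringsheim's theorem makes $\tau^{-1}\in(0,1)$ a real pole of smallest modulus; reciprocity then gives $\tau$ itself as a root, with $\tau^{-1}$ its reciprocal partner.

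The remaining and hardest step is to show that every other conjugate of $\tau$ lies on the unit circle, so that its minimal polynomial is a Salem polynomial. I would clear denominators to a monic reciprocal integer polynomial, use the compactness of $\mathcal P$ to rule out any further pole of $f$ in $(0,1)$, and then split off the cyclotomic factors and apply a Kronecker-type dichotomy --- a monic integer polynomial with all roots in the closed unit disc has only roots of unity as nonzero roots --- to force the surviving non-real conjugates onto $|t|=1$. Establishing that these other conjugates cannot escape the unit circle is where the real difficulty lies; the explicit vertex functions \eqref{eqParry2} are precisely what render the reciprocity and the pole count transparent.
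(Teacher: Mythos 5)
Note first that the paper itself contains no proof of this statement: Theorem~\ref{GF_representation} is quoted from Parry's paper \cite{Parry}, so your attempt must be measured against Parry's original argument. Your derivation of the identity (\ref{eqParry1})--(\ref{eqParry2}) is correct and is essentially the right route: for a compact non-obtuse polyhedron every $T\in\mathcal{F}$ is the stabiliser of a face (as the paper records in Section~2.1), compactness forces simplicity, so $3V=2E$ and Euler's relation give $F-2=V/2$; regrouping the constant and facet terms as $\frac{t-1}{t+1}-\sum_{v}\frac{1}{2(1+t)}$ and halving each edge term between its two endpoints reduces Steinberg's formula to the local identity ``bracket $=g_v(t)$'', which is a finite check over $A_1\times I_2(m)$, $A_3$, $B_3$, $H_3$ using Solomon's product formula. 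Your verification that each $g_v$ and $\frac{t-1}{t+1}$ are anti-reciprocal, recovering $f(t^{-1})=-f(t)$, is also correct and consistent with Theorem~\ref{reciprocity}.

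The genuine gap is in the Salem assertion, and your own text flags it: the Kronecker-type dichotomy you invoke presupposes exactly what has to be proved, namely that all conjugates of $\tau$ other than $\tau^{-1}$ already lie in the \emph{closed} unit disc. Anti-reciprocity only pairs roots $\lambda\leftrightarrow\lambda^{-1}$ and is perfectly consistent with several pairs off the unit circle; Pringsheim identifies $\tau^{-1}$ as the smallest-modulus pole but says nothing about non-real roots of modulus strictly between $\tau^{-1}$ and $\tau$; and ``use the compactness of $\mathcal{P}$ to rule out any further pole of $f$ in $(0,1)$'' is an assertion, not an argument --- and even granted, it only constrains real roots. What is missing is precisely the mechanism Parry builds: a quantitative analysis of the explicit vertex functions (his ``Salem series'' machinery, exploiting that factors such as $\frac{t^{m}-1}{t^{m+1}-1}$ have power-series expansions with tightly controlled coefficients) which shows that the numerator of $1/f(t^{-1})$, after removing cyclotomic factors, has exactly \emph{one} zero in the open unit disc; the reciprocal pairing then forces every remaining root onto $|t|=1$, and only at that point does the Salem conclusion follow. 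Without a zero-counting argument of this kind (coefficient positivity, argument principle, or interlacing), your final step does not go through, so the proposal proves the identity (\ref{eqParry1})--(\ref{eqParry2}) but not the theorem's number-theoretic conclusion.
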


\begin{table}[ht]
\begin{center}
\begin{tabular}{|c|c|c|c|}
\hline \multirow{2}{*}{Vertex group $Stab(v)$}&  \multicolumn{3}{|c|}{Coxeter exponents}\\ 
\cline{2-4} &  $m_1$&  $m_2$&  $m_3$\\ 
\hline $\Delta_{2,2,n}$, $n\geq 2$&  1&  1&  $n-1$\\ 
\hline $\Delta_{2,3,3}$&  1&  2&  3\\ 
\hline $\Delta_{2,3,4}$&  1&  3&  5\\ 
\hline $\Delta_{2,3,5}$&  1&  5&  9\\ 
\hline 
\end{tabular} 
\end{center}
\caption{Coxeter exponents}\label{tabular_for_GF_representation}
\end{table}

A rational function $f(t)$, that is not a polynomial, is called reciprocal if $f(t^{-1}) = f(t)$, and anti-reciprocal if $f(t^{-1}) = -f(t)$. In the case of growth functions for Coxeter groups acting cocompactly on $\mathbb H^n$, the following result holds.

\begin{theorem}[R.~Charney, M.~Davis,  \cite{Charney}]\label{reciprocity}
Let $G=(G,S)$ be a Coxeter group acting cocompactly on $\mathbb H^n$. Then, its growth function $f_{S}(t)$ is reciprocal if $n$ is even, and anti-reciprocal if $n$ is odd.
\end{theorem}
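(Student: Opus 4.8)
The goal is to establish that the growth function satisfies $f_S(t^{-1}) = (-1)^n f_S(t)$, which is reciprocity for even $n$ and anti-reciprocity for odd $n$. The plan is to run Steinberg's formula (Theorem~\ref{Steinberg_formula}) twice and feed in the self-duality of finite Coxeter groups. First I would record the palindrome property: for each $T\in\mathcal F$ the group $W_T := \langle T\rangle$ is finite, and if $N_T$ denotes its number of reflections (the length of the longest element $w_0\in W_T$), the length-reversing bijection $w\mapsto w_0 w$ gives $f_T(t) = t^{N_T} f_T(t^{-1})$. Substituting $t\mapsto t^{-1}$ in~(\ref{eqSteinberg}) and then inserting $f_T(t^{-1}) = t^{-N_T} f_T(t)$ yields
\[ \frac{1}{f_S(t)} = \sum_{T\in\mathcal F}\frac{(-1)^{|T|}}{f_T(t^{-1})} = \sum_{T\in\mathcal F}\frac{(-1)^{|T|}\,t^{N_T}}{f_T(t)}, \]
while~(\ref{eqSteinberg}) itself reads $1/f_S(t^{-1}) = \sum_{T}(-1)^{|T|}/f_T(t)$. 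Since $(-1)^{-n} = (-1)^n$, the desired identity is equivalent to the single relation
\[ \sum_{T\in\mathcal F}\frac{(-1)^{|T|}}{f_T(t)} \;=\; (-1)^n\sum_{T\in\mathcal F}\frac{(-1)^{|T|}\,t^{N_T}}{f_T(t)}. \qquad (\star) \]

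Next I would interpret $(\star)$ geometrically. Because $\mathcal P$ is compact, a subset $T\in\mathcal F$ with $|T| = k$ is precisely the set of facet-reflections meeting along a face in $\Omega_{n-k}(\mathcal P)$, so $\mathcal F\setminus\{\emptyset\}$ is the face poset of $\partial\mathcal P$; equivalently, $\mathcal F$ is the poset of simplices (the empty simplex included) of the nerve $L$ of $(G,S)$, and cocompactness is exactly what forces $L$ to triangulate a genuine sphere $S^{n-1}$. Under this dictionary $(\star)$ is a weighted Poincar\'e-duality statement for $L$: the two sides are the two natural ways of assembling the rational local contributions $1/f_T(t)$ over the faces and over their dual cofaces, and they must coincide up to the global sign $(-1)^n$ prescribed by duality on an $(n-1)$-sphere. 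The sign is essential: for odd $n$ anti-reciprocity forces $f_S(1) = -f_S(1)$, i.e. a pole of $f_S$ at $t=1$, reflecting the vanishing of the orbifold Euler characteristic $\chi(\mathbb H^n/G) = 1/f_S(1)$ in odd dimensions.

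To prove $(\star)$ I would induct on $n$ through the links of the nerve. For each nonempty $T\in\mathcal F$ the cofaces $\{T'\in\mathcal F : T'\supseteq T\}$ form the face poset of $\mathrm{Lk}_L(T)\cong S^{\,n-|T|-1}$, which is itself the nerve of a lower-dimensional spherical Coxeter configuration, so the inductive hypothesis supplies the local duality needed to pair the $T$-term on the left of $(\star)$ against the matching $t^{N_T}$-weighted term on the right; the base cases are the finite groups $W_T$, where the required relation is again just the longest-element palindrome. \textbf{The main obstacle} I anticipate is precisely this bookkeeping step: converting the topological input ``$L$ is a sphere'' into the exact term-by-term matching in $(\star)$, i.e. checking that the inductive duality on links assembles with the correct signs $(-1)^{|T|}$ and weights $t^{N_T}$. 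This is the place where one genuinely uses that $\partial\mathcal P$ has no boundary, which fails the moment $\mathcal P$ acquires an ideal vertex and is exactly the breakdown the deformation results of the present paper exploit. As a conceptual alternative that sidesteps the combinatorics, one may instead observe that the Davis complex of $(G,S)$ is $G$-equivariantly homeomorphic to $\mathbb H^n$, an $n$-manifold, and read $(\star)$ off from equivariant Poincar\'e duality, the growth function being the generating function of the associated cell counts.
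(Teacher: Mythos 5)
First, a caveat on the comparison itself: the paper gives no proof of Theorem~\ref{reciprocity} --- it is imported from \cite{Charney} with a citation --- so your attempt can only be measured against the Charney--Davis argument. Your reduction is correct and is in fact exactly how that argument begins: the palindromic property $f_T(t)=t^{N_T}f_T(t^{-1})$ of the Poincar\'e polynomial of a finite Coxeter group, the substitution $t\mapsto t^{-1}$ in (\ref{eqSteinberg}), and the resulting reformulation $(\star)$ are all right, as is your identification of $\mathcal{F}$ with the simplices of the nerve $L\cong S^{n-1}$ (compact hyperbolic Coxeter polyhedra are simple, so $L$ is the boundary complex of the dual polytope). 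But everything after $(\star)$ is a gesture rather than a proof, and you say so yourself: ``weighted Poincar\'e duality'' is never formulated as a precise statement, the induction on links is never set up (a link of a simplex in $L$ is not itself the nerve of a Coxeter group of lower rank acting on a lower-dimensional hyperbolic space, so it is unclear what the inductive hypothesis even asserts), and the term-by-term matching you flag as ``the main obstacle'' is the entire content of $(\star)$. The closing alternative via equivariant Poincar\'e duality on the Davis complex is likewise not carried out. As written, the proposal is an honest plan with the decisive step missing.

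The gap closes with strictly less machinery than you invoke: no duality, no induction, only Euler characteristics of links. Apply Steinberg's formula (\ref{eqSteinberg}) to each finite Coxeter system $(W_T,T)$ itself; combined with $f_T(t^{-1})=t^{-N_T}f_T(t)$ it gives
\[
\frac{t^{N_T}}{f_T(t)}=\sum_{T'\subseteq T}\frac{(-1)^{|T'|}}{f_{T'}(t)}\,.
\]
Insert this into the right-hand side of $(\star)$ and exchange the order of summation:
\[
\sum_{T\in\mathcal{F}}\frac{(-1)^{|T|}\,t^{N_T}}{f_T(t)}
=\sum_{T'\in\mathcal{F}}\frac{(-1)^{|T'|}}{f_{T'}(t)}
\Bigl(\sum_{T\in\mathcal{F},\;T\supseteq T'}(-1)^{|T|}\Bigr)\,.
\]
For fixed $T'$, writing $T=T'\cup U$ with $U$ ranging over the empty set together with the simplices of $\mathrm{Lk}_L(T')$ (respectively of $L$ itself when $T'=\emptyset$), the inner sum equals $(-1)^{|T'|}\bigl(1-\chi(\mathrm{Lk}_L(T'))\bigr)$; since the link has the Euler characteristic of $S^{\,n-1-|T'|}$, namely $1+(-1)^{n-1-|T'|}$, this is $(-1)^{|T'|}(-1)^{n-|T'|}=(-1)^n$, independent of $T'$. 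Pulling out the constant $(-1)^n$ is precisely $(\star)$. Note that the only topological input is that every link in $L$ has the Euler characteristic of a sphere of complementary dimension, i.e.\ that $L$ is Eulerian --- weaker than genuine sphericity or Poincar\'e duality, and exactly the hypothesis under which Charney and Davis prove the theorem. So your skeleton (palindromes plus Steinberg plus sphere-ness of the nerve) is the right one; what it lacks is the one-line mechanism --- Steinberg's formula turned back on the finite subgroups, followed by a double-sum swap --- that makes your feared bookkeeping trivial.
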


For further references on the subject, which treat several general and useful aspects for this
work, we refer to \cite{CLS} and \cite{KP}.

\medskip
The following example illustrates some facts mentioned above.

\smallskip
\begin{example}
Let $\mathcal{D}_n\subset\mathbb H^3\,,\,n\in\mathbb N\,,$ be a hyperbolic dodecahedron with all but one right dihedral angles. The remaining angle along the thickened edge of $D_n$, as shown in Fig.~\ref{dodecahedron}, equals $\frac{\pi}{n+2}$, $n\geq 0$. The initial polyhedron $\mathcal{D}_0$ is known as the L\"{o}bell polyhedron $L(5)$ (see \cite{Vesnin}). As $n \rightarrow \infty$, the sequence of polyhedra tends to a right-angled hyperbolic polyhedron $\mathcal{D}_\infty$ with precisely one vertex at infinity. Let us compute the growth functions and growth rates of $G(\mathcal{D}_n)$, $n\geq 0,$ and $G(\mathcal{D}_{\infty})$.

\begin{figure}[ht]
\begin{center}
\includegraphics* [totalheight=4cm]{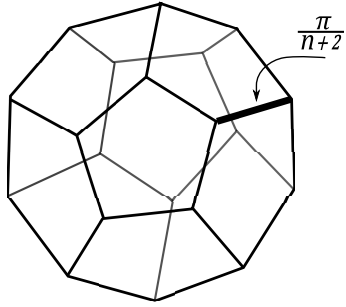}
\end{center}
\caption{The dodecahedron $\mathcal{D}_n \subset \mathbb{H}^3$, $n\geq 0$, with all but one right dihedral angles. The specified angle equals $\frac{\pi}{n+2}$} \label{dodecahedron}
\end{figure}

By Theorem~\ref{GF_representation}, the growth function of $G(\mathcal{D}_n)$, with respect to the generating set $S$ of reflections in the faces of $\mathcal{D}_n$, equals
\begin{equation}\label{eqExample1}
f_n(t) = \frac{(1+t)^3\, (1 + t + \dots + t^{n-1})}{1 - 8 t + 8 t^{n+1} - t^{n+2}}\,\,,
\end{equation}
and similarly
\begin{equation}\label{eqExample2}
f_\infty(t) = \frac{(1+t)^3}{(1 - t)(1 - 8 t)}\,\,.
\end{equation}
Observe that the function (\ref{eqExample1}) is anti-reciprocal, but the function (\ref{eqExample2}) is not.

The computation of the growth rates $\tau_n$, $n\geq 0,$ for $G(\mathcal{D}_n)$ and of the growth rate $\tau_\infty$ for $G(\mathcal{D}_\infty)$ gives 
\begin{equation*}
\tau_0 \approx 7.87298 < \tau_1 \approx 7.98453 < \dots < \tau_{\infty} = 8.
\end{equation*}
Thus, the Salem numbers numbers $\tau_n$, $n\geq 0$, tend from below to $\tau_{\infty}$, which is a Pisot number.
\end{example}

\medskip
Consider a finite-volume polytope $\mathcal{P} \subset \mathbb{H}^n$ and a compact face $F\in \Omega_{n-2}(\mathcal{P})$ with dihedral angle $\alpha_F$. We always suppose that $\mathcal{P}$ is not degenerated (i.e. not contained in a hyperplane). Suppose that there is a sequence of polytopes $\mathcal{P}(k) \subset \mathbb{H}^n$ having the same combinatorial type and the same dihedral angles as $\mathcal{P}=\mathcal{P}(1)$ apart from 
$\alpha_F$ whose counterpart  $\alpha_F(k)$ tends to $0$ as $k \nearrow \infty$. Suppose that the limiting polytope $\mathcal{P}_{\infty}$ exists and has the same number of facets as $\mathcal{P}$. This means the facet $F$, which is topologically a codimension two ball, is contracted to a point, which is a vertex at infinity $v_{\infty} \in \partial \mathbb{H}^n$ of $\mathcal{P}_{\infty}$.
We call this process \textit{contraction of the face $F$ to an ideal vertex}.
\medskip

\begin{remark}
In the case $n=2$, an ideal vertex of a Coxeter polygon $\mathcal{P}_0 \subset \mathbb{H}^2$ comes from ``contraction of a compact vertex'' \cite{Floyd}. This means a vertex $F\in \Omega_{0}(\mathcal{P})$
of some hyperbolic Coxeter polygon $\mathcal{P}$
is pulled towards a point at infinity.
\end{remark}

\medskip
In the above deformation process, the existence of the polytopes $\mathcal{P}(k)$ in hyperbolic space is of fundamental importance. Let us consider the three-dimensional case. Since the angles of hyperbolic finite-volume Coxeter polyhedra are non-obtuse, the theorem by E.M.~Andreev \cite[p.~112, Theorem~2.8]{Vinberg2} is applicable in order to conclude about their existence and combinatorial structure. 

In order to state Andreev's result, recall that a $k$-circuit, $k\geq 3$, is an ordered sequence of faces $F_1,\dots,F_k$ of a given polyhedron $\mathcal{P}$ such that each face is adjacent only to the previous and the following ones, while the last one is adjacent only to the first one, and no three of them share a common vertex.
\begin{theorem}[Vinberg, \cite{Vinberg2}]
Let $\mathcal{P}$ be a combinatorial polyhedron, not a simplex, such that three or four faces meet at every vertex. Enumerate all the faces of $\mathcal{P}$ by $1,\dots, |\Omega_2(\mathcal{P})|$. Let $F_i$ be a face, $E_{ij} = F_i \cap F_j$ an edge, and $V_{ijk}=\cap_{s\in\{i,j,k\}} F_s$ or $V_{ijkl}=\cap_{s\in\{i,j,k,l\}} F_s$ a vertex of $\mathcal{P}$. Let $\alpha_{ij} \geq 0$ be the weight of the edge $E_{ij}$. The following conditions are necessary and sufficient for the polyhedron $\mathcal{P}$ to exist in $\mathbb H^3$ having the dihedral angles $\alpha_{ij}$:
\begin{enumerate}
\item[$(\mathfrak{m}_0)$] $0 < \alpha_{ij} \leq \frac{\pi}{2}$.
\item[$(\mathfrak{m}_1)$] If $V_{ijk}$ is a vertex of $\mathcal{P}$, then $\alpha_{ij}+\alpha_{jk}+\alpha_{ki} \geq \pi$, and if $V_{ijkl}$ is a vertex of $\mathcal{P}$, then $\alpha_{ij}+\alpha_{jk}+\alpha_{kl}+\alpha_{li} = 2\pi$.
\item[$(\mathfrak{m}_2)$] If $F_i$, $F_j$, $F_k$ form a $3$-circuit, then $\alpha_{ij} + \alpha_{jk} + \alpha_{ki} < \pi$.
\item[$(\mathfrak{m}_3)$] If $F_i$, $F_j$, $F_k$, $F_l$ form a $4$-circuit, then $\alpha_{ij}+\alpha_{jk}+\alpha_{kl}+\alpha_{li} < 2\pi$.
\item[$(\mathfrak{m}_4)$] If $\mathcal{P}$ is a triangular prism with bases $F_1$ and $F_2$, then $\alpha_{13}+\alpha_{14}+\alpha_{15}+\alpha_{23}+\alpha_{24}+\alpha_{25} < 3\pi$.
\item[$(\mathfrak{m}_5)$] If among the faces $F_i$, $F_j$, $F_k$, the faces $F_i$ and $F_j$, $F_j$ and $F_k$ are adjacent, $F_i$ and $F_k$ are not adjacent, but concurrent in a vertex $v_{\infty}$, and all three $F_i$, $F_j$, $F_k$ do not meet at $v_{\infty}$, then $\alpha_{ij}+\alpha_{jk} < \pi$.
\end{enumerate}
\end{theorem}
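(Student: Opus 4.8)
The plan is to separate the statement into its necessity and sufficiency parts and to treat each through the Gram matrix of $\mathcal{P}$. Fix outward unit normals $e_1,\dots,e_N$ to the facets $F_1,\dots,F_N$ in the Lorentzian vector space $\mathbb{R}^{3,1}$ modelling $\mathbb{H}^3$, and form the Gram matrix $G=(g_{ij})$ with $g_{ii}=1$, $g_{ij}=-\cos\alpha_{ij}$ whenever $F_i,F_j$ are adjacent, and $g_{ij}\le -1$ otherwise (equal to $-1$ for parallel facets meeting at infinity, strictly less than $-1$ for ultraparallel ones). The central fact from Vinberg's theory of acute-angled polyhedra is that a finite-volume acute-angled polyhedron in $\mathbb{H}^3$ is determined up to isometry by such a $G$, and that $G$ must have signature $(3,1)$ with the sign pattern above; conversely any admissible $G$ of that signature is realised by a genuine polyhedron. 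Thus the whole theorem reduces to understanding which combinatorial data and angle assignments $(\alpha_{ij})$ admit a completion to a matrix $G$ of the correct signature.

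For necessity I would read each condition off the signature of the appropriate principal submatrix of $G$. Condition $(\mathfrak{m}_0)$ is the non-obtuseness built into $g_{ij}\le 0$. For $(\mathfrak{m}_1)$ I would use that the link of a genuine vertex is a spherical Coxeter triangle or quadrilateral, so the corresponding $3\times 3$ or $4\times 4$ block is positive (semi)definite; evaluating its determinant gives the angle-sum inequality for a trihedral vertex (strict for a proper vertex, with equality exactly when its link is Euclidean and the vertex is ideal), and the $=2\pi$ relation for a four-valent vertex, whose link is necessarily Euclidean so that the block is positive semidefinite of corank one. Conditions $(\mathfrak{m}_2)$--$(\mathfrak{m}_5)$ are the prismatic or false-vertex conditions: a $k$-circuit whose faces do not actually meet must yield an \emph{indefinite} block, for otherwise the three or four hyperplanes would share a point and create a spurious vertex contradicting the combinatorics. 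Translating ``not positive definite'' through the determinant of the circuit block produces exactly the stated inequalities, the prism case $(\mathfrak{m}_4)$ and the ideal case $(\mathfrak{m}_5)$ being the remaining special configurations.

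For sufficiency I would follow the continuity (deformation) method of Andreev, in the form made rigorous by Roeder--Hubbard--Dunbar. Let $C$ be the fixed combinatorial type, let $\mathcal{A}_C\subset\mathbb{R}^{E}$ be the convex region cut out by $(\mathfrak{m}_0)$--$(\mathfrak{m}_5)$, let $\mathcal{P}_C$ be the space of marked hyperbolic polyhedra of type $C$ up to isometry, and let $\Phi:\mathcal{P}_C\to\mathbb{R}^{E}$ record the dihedral angles. The necessity part shows $\Phi(\mathcal{P}_C)\subseteq\mathcal{A}_C$. I would then establish three properties: $(i)$ $\Phi$ is a local homeomorphism, which follows from an infinitesimal rigidity statement --- a Cauchy--Schl\"{a}fli type argument showing that fixing all dihedral angles kills every infinitesimal deformation, so $d\Phi$ is an isomorphism between tangent spaces of equal dimension; $(ii)$ $\Phi$ is proper onto $\mathcal{A}_C$, that is, a sequence of polyhedra whose angles converge to a point of the open region $\mathcal{A}_C$ cannot degenerate; and $(iii)$ $\mathcal{A}_C$ is connected, being convex. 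A proper local homeomorphism into a connected manifold whose image is open, closed and nonempty is a covering, and rigidity forces it to be injective; hence $\Phi$ maps $\mathcal{P}_C$ homeomorphically onto $\mathcal{A}_C$, which is precisely existence together with uniqueness.

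The main obstacle is clearly step $(ii)$, the properness. One must enumerate the ways a family $\mathcal{P}(k)$ can degenerate --- a facet shrinking to a point or a segment, a vertex escaping to $\partial\mathbb{H}^3$, or the Gram matrix losing rank or changing signature --- and show that each degeneration forces the limiting angles to violate one of the boundary relations of $\mathcal{A}_C$, so that staying in the \emph{open} region precludes collapse. This is exactly where the circuit conditions $(\mathfrak{m}_2)$--$(\mathfrak{m}_5)$ earn their keep, since the offending degenerations are precisely the appearance of false vertices along $3$- and $4$-circuits and the prism degeneracy. The infinitesimal rigidity in step $(i)$ is a secondary difficulty, handled by the Schl\"{a}fli differential formula together with a discrete maximum-principle argument on the edge graph, but it is the properness that requires the delicate geometric case analysis forming the technical heart of Andreev's theorem.
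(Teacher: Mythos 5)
You should first be aware that the paper contains no proof of this statement at all: Andreev's theorem is imported verbatim from Vinberg's survey \cite[p.~112, Theorem~2.8]{Vinberg2} and used as a black box throughout Section~3. So there is no internal argument to compare against, and your proposal has to be judged as a reconstruction of the classical proof. In outline it does follow the standard route --- necessity read off from vertex links and circuit configurations, sufficiency by Andreev's continuity method (the map $\Phi$ from marked polyhedra to angle vectors, shown to be a local homeomorphism, proper, and onto the convex region $\mathcal{A}_C$) --- and the emphasis on properness as a hard step is well placed.

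There are, however, two genuine gaps. First, your covering-space argument ("a proper local homeomorphism ... whose image is open, closed and nonempty") silently requires $\mathcal{P}_C\neq\emptyset$, i.e.\ the existence of at least one hyperbolic polyhedron of each admissible combinatorial type, and you give no mechanism for producing one: the open-and-closed argument alone only shows the image is empty or all of $\mathcal{A}_C$. This is not a formality --- it is exactly the step at which Andreev's original published proof contained an error, repaired by Roeder--Hubbard--Dunbar through a lengthy combinatorial reduction exhibiting explicit realizable angle assignments; a proposal that defers both properness and this existence step to the literature has cited the known proof rather than sketched one. Second, the statement at hand is the \emph{finite-volume} version, not the compact one: four-valent vertices force the equality $\alpha_{ij}+\alpha_{jk}+\alpha_{kl}+\alpha_{li}=2\pi$ in $(\mathfrak{m}_1)$, so $\mathcal{A}_C$ lies in a proper affine subspace of $\mathbb{R}^E$ and is not open, and ideal vertices are legitimate limits rather than degenerations. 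Your properness dichotomy ("angles converging inside the open region preclude collapse") must therefore distinguish a vertex escaping to $\partial\mathbb{H}^3$ --- which is allowed and encoded by the equality case of $(\mathfrak{m}_1)$ and by $(\mathfrak{m}_5)$, a condition with no compact-case counterpart --- from genuine collapse; the compact-case analysis you invoke does not transfer verbatim. A lesser inaccuracy: your opening reduction of the whole theorem to completing the Gram matrix to signature $(3,1)$ is not sound as stated, since realizing an admissible Gram matrix yields \emph{some} acute-angled polyhedron but not necessarily one of the prescribed combinatorial type, and the entries $g_{ij}\leq -1$ at non-adjacent facets are unknowns rather than data; fortunately the rest of your argument does not actually use this reduction, and similarly $(\mathfrak{m}_4)$ is not the signature condition of any principal submatrix and needs its own derivation.
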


\section{Coxeter groups acting on hyperbolic three-space}

\subsection{Deformation of finite volume Coxeter polyhedra}

Let $\mathcal{P} \subset \mathbb{H}^3$ be a Coxeter polyhedron of finite volume with at least $5$ faces.
Suppose that $k_1, k_2, n, l_1, l_2\ge2\,$ are integers. 

\smallskip
\begin{definition}
{\it An edge $e \in \Omega_1(\mathcal{P})$ is a ridge of type $\langle k_1, k_2, n, l_1, l_2\rangle$ if $e$ 
is bounded and has trivalent vertices $v,w$ such that the dihedral angles at the incident edges are arranged counter-clockwise as follows: 
the dihedral angles along the edges incident to $v$ are  $\frac{\pi}{k_1}$, $\frac{\pi}{k_2}$ and $\frac{\pi}{n}$, the dihedral angle along  the edges incident to $w$ are $\frac{\pi}{l_1}$, $\frac{\pi}{l_2}$ and $\frac{\pi}{n}$. In addition, the faces sharing $e$ are at least quadrangles (see Fig.~\ref{ridge}).}
\end{definition} 

\begin{figure}[ht]
\begin{center}
\includegraphics* [totalheight=3cm]{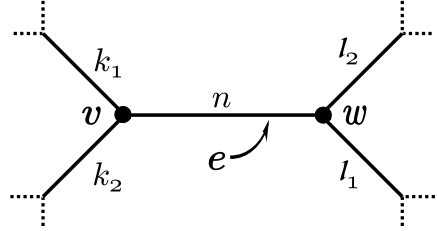}
\end{center}
\caption{A ridge of type $\langle k_1, k_2, n, l_1, l_2 \rangle$} \label{ridge}
\end{figure}

\begin{note}
All the figures Fig.~\ref{resolution_1}-\ref{four_circuit} are drawn according to the following pattern:
only significant combinatorial elements are highlighted (certain vertices, edges and faces), and the remaining  ones are not specified and overall coloured grey. In each figure, the polyhedron is represented by its projection onto one of its supporting planes, and its dihedral angles of the form $\pi/m$ are labelled with $m$.
\end{note}

\begin{proposition}\label{edge_contraction}
Let $\mathcal{P} \subset \mathbb{H}^3$ be a Coxeter polyhedron of finite volume with $|\Omega_2(\mathcal{P})| \geq 5$. If $\mathcal{P}$ has a ridge $e\in \Omega_1(\mathcal{P})$ of type $\langle2,2,n,2,2\rangle$, $n \geq 2$, then $e$ can be contracted to a four-valent ideal vertex.
\end{proposition}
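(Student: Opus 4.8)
The plan is to exhibit the contraction as the limit of a one-parameter deformation that drives the dihedral angle $\pi/n$ along $e$ down to $0$, and to certify the existence of the whole family -- most importantly of the limiting polyhedron -- by Andreev's theorem in the form stated above. I begin by fixing the local combinatorics. Write $F_a$ and $F_b$ for the two faces meeting along $e=F_a\cap F_b$, so that the dihedral angle along $e$ is $\pi/n$. As $v$ and $w$ are trivalent, a third face $F_c$ passes through $v$ and a fourth face $F_d$ through $w$, with $F_c\ne F_d$ since two faces of a convex polyhedron meet along at most one edge; by the ridge hypothesis the remaining edges at $v$ and $w$, namely $F_a\cap F_c$, $F_b\cap F_c$, $F_a\cap F_d$, $F_b\cap F_d$, all carry the dihedral angle $\pi/2$.

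I would then define the limiting combinatorial polyhedron $\mathcal{P}_\infty$ by deleting $e$ and identifying its endpoints to a single vertex $v_\infty$. Reading the two trivalent stars off $v$ and $w$ shows that around $v_\infty$ the faces occur in the cyclic order $F_a, F_c, F_b, F_d$; thus $v_\infty$ is four-valent, with $F_a,F_b$ and $F_c,F_d$ as the two pairs of opposite faces. The first point to justify is that this operation yields a genuine polyhedron with the same facets as $\mathcal{P}$: deleting $e$ and merging $v,w$ removes one edge from each of $F_a$ and $F_b$, and this is precisely where the hypothesis that the faces sharing $e$ are at least quadrilaterals is needed, since it ensures that $F_a$ and $F_b$ survive as (at least) triangles and no facet degenerates.

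Next I would run through Andreev's conditions $(\mathfrak{m}_0)$--$(\mathfrak{m}_5)$ for $\mathcal{P}_\infty$. All dihedral angles are those of $\mathcal{P}$ except that $\pi/n$ has disappeared, so $(\mathfrak{m}_0)$ is immediate, and every vertex other than $v_\infty$ retains its angles and hence its inequalities. The crucial verification is $(\mathfrak{m}_1)$ at $v_\infty$, whose four incident angles, along $F_a\cap F_c$, $F_c\cap F_b$, $F_b\cap F_d$, $F_d\cap F_a$, are each $\pi/2$, so that
\begin{equation*}
\frac{\pi}{2}+\frac{\pi}{2}+\frac{\pi}{2}+\frac{\pi}{2}=2\pi;
\end{equation*}
hence $v_\infty$ is a bona fide four-valent ideal vertex whose link is a Euclidean rectangle. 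The faces $F_a$ and $F_b$ have become non-adjacent but concurrent at $v_\infty$, so I would check $(\mathfrak{m}_2)$, $(\mathfrak{m}_3)$ and $(\mathfrak{m}_5)$ against this single new configuration: the only faces adjacent to both $F_a$ and $F_b$ are $F_c$ and $F_d$, and since each of them passes through $v_\infty$ the triples $\{F_a,F_b,F_c\}$ and $\{F_a,F_b,F_d\}$ all meet at $v_\infty$, so they form neither an admissible short circuit nor a configuration to which $(\mathfrak{m}_5)$ applies. Condition $(\mathfrak{m}_4)$ only intervenes if $\mathcal{P}_\infty$ is a triangular prism, which reduces to a direct check of the six listed angles.

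With all conditions verified, Andreev's theorem produces $\mathcal{P}_\infty$ as a finite-volume Coxeter polyhedron. The same conditions hold verbatim with any $\theta\in(0,\pi/n]$ in place of $\pi/n$ -- now $v$ and $w$ are finite, since $\tfrac{\pi}{2}+\tfrac{\pi}{2}+\theta=\pi+\theta>\pi$ -- which yields the deforming family $\mathcal{P}(\theta)$. The step I expect to be the main obstacle is the purely geometric claim that as $\theta\searrow 0$ the ridge $e$ is not merely shortened but collapses onto a single ideal point; I would settle this by invoking the fact that two hyperbolic planes whose mutual angle tends to $0$ become asymptotically parallel and therefore share exactly one point of $\partial\mathbb{H}^3$, which forces both endpoints of $e$ to converge to the common ideal point $v_\infty$. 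Together with the Andreev realisation of $\mathcal{P}_\infty$, this establishes the contraction of $e$ to the four-valent ideal vertex $v_\infty$.
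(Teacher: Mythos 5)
Your overall strategy coincides with the paper's: realise the contraction as a one-parameter family with the angle along $e$ decreasing to $0$, and certify the existence of the family and of the limit $\mathcal{P}_\infty$ by Andreev's theorem. Your treatment of $(\mathfrak{m}_0)$, $(\mathfrak{m}_1)$ and of the local combinatorics at $v_\infty$ (the cyclic order $F_a,F_c,F_b,F_d$, the Euclidean rectangular link, the role of the quadrangle hypothesis) matches the paper. The genuine gap is in your verification of $(\mathfrak{m}_5)$. You assert that ``the only faces adjacent to both $F_a$ and $F_b$ are $F_c$ and $F_d$''. That is true only locally at $v_\infty$: in a general polyhedron a face $F_j$ may be adjacent to each of $F_a$ and $F_b$ along edges far from the contracted ridge, and then the triple $(F_a,F_j,F_b)$ is exactly a configuration to which $(\mathfrak{m}_5)$ applies, since in $\mathcal{P}_\infty$ the faces $F_a,F_b$ are concurrent at $v_\infty$ without being adjacent while $F_j$ does not pass through $v_\infty$. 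Moreover, you never examine the pair $(F_c,F_d)$ at all: these two faces are likewise non-adjacent but concurrent at $v_\infty$, so any face $F_j$ adjacent to both of them yields a further $(\mathfrak{m}_5)$-triple $(F_c,F_j,F_d)$. Ruling out these global configurations is precisely the content of the proposition, and it occupies the bulk of the paper's proof.

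The paper disposes of them by contradiction: if a triple violates $(\mathfrak{m}_5)$, then non-obtuseness of Coxeter angles forces the two angles of the triple to equal $\pi/2$ exactly, and carrying this configuration back to the unconstracted polyhedron $\mathcal{P}$ produces a forbidden circuit there --- a $3$-circuit $(F_a,F_j,F_b)$ with angle sum $\tfrac{\pi}{n}+\tfrac{\pi}{2}+\tfrac{\pi}{2}\ge\pi$ violating $(\mathfrak{m}_2)$, or a $4$-circuit such as $(F_c,F_a,F_d,F_j)$ with angle sum $2\pi$ violating $(\mathfrak{m}_3)$ --- contradicting the existence of $\mathcal{P}$. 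When $\mathcal{P}$ is a triangular prism the circuit conditions degenerate and the paper argues separately through $(\mathfrak{m}_2)$ and $(\mathfrak{m}_4)$ (Fig.~\ref{prism_resolution}); your one-line remark that $(\mathfrak{m}_4)$ ``reduces to a direct check'' glosses over exactly this case. A smaller related inaccuracy: since the chord $F_a\cap F_b$ disappears under contraction, new $4$-circuits of the form $(F_a,F_j,F_b,F_k)$ can appear in $\mathcal{P}_\infty$, so $(\mathfrak{m}_3)$ is not vacuous for the new configuration either (though its validity does follow from $(\mathfrak{m}_2)$ applied to $\mathcal{P}$). Until the $(\mathfrak{m}_5)$-configurations are excluded along these lines, your argument does not establish the existence of $\mathcal{P}_\infty$, which is the heart of the statement.
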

\begin{proof}
Denote by $\mathcal{P}(m)$ a polyhedron having the same combinatorial type and the same dihedral angles as $\mathcal{P}$, except for the angle $\alpha_m = \frac{\pi}{m}$ along $e$. We show that $\mathcal{P}(m)$ exists for all $m \geq n$. Both vertices $v,w$ of $e\in\Omega_1(\mathcal{P}(m))$ are points in $\mathbb{H}^3$, since the sum of dihedral angles at each of them equals $\pi + \frac{\pi}{m}$ for $m \geq n \geq 2$. Thus, condition $\mathfrak{m}_1$ of Andreev's theorem holds. Condition $\mathfrak{m}_0$ is obviously satisfied, as well as conditions $\mathfrak{m}_2$-$\mathfrak{m}_4$, since $\alpha_m \leq \alpha_n$.

\begin{figure}[ht]
\begin{center}
\includegraphics* [totalheight=5cm]{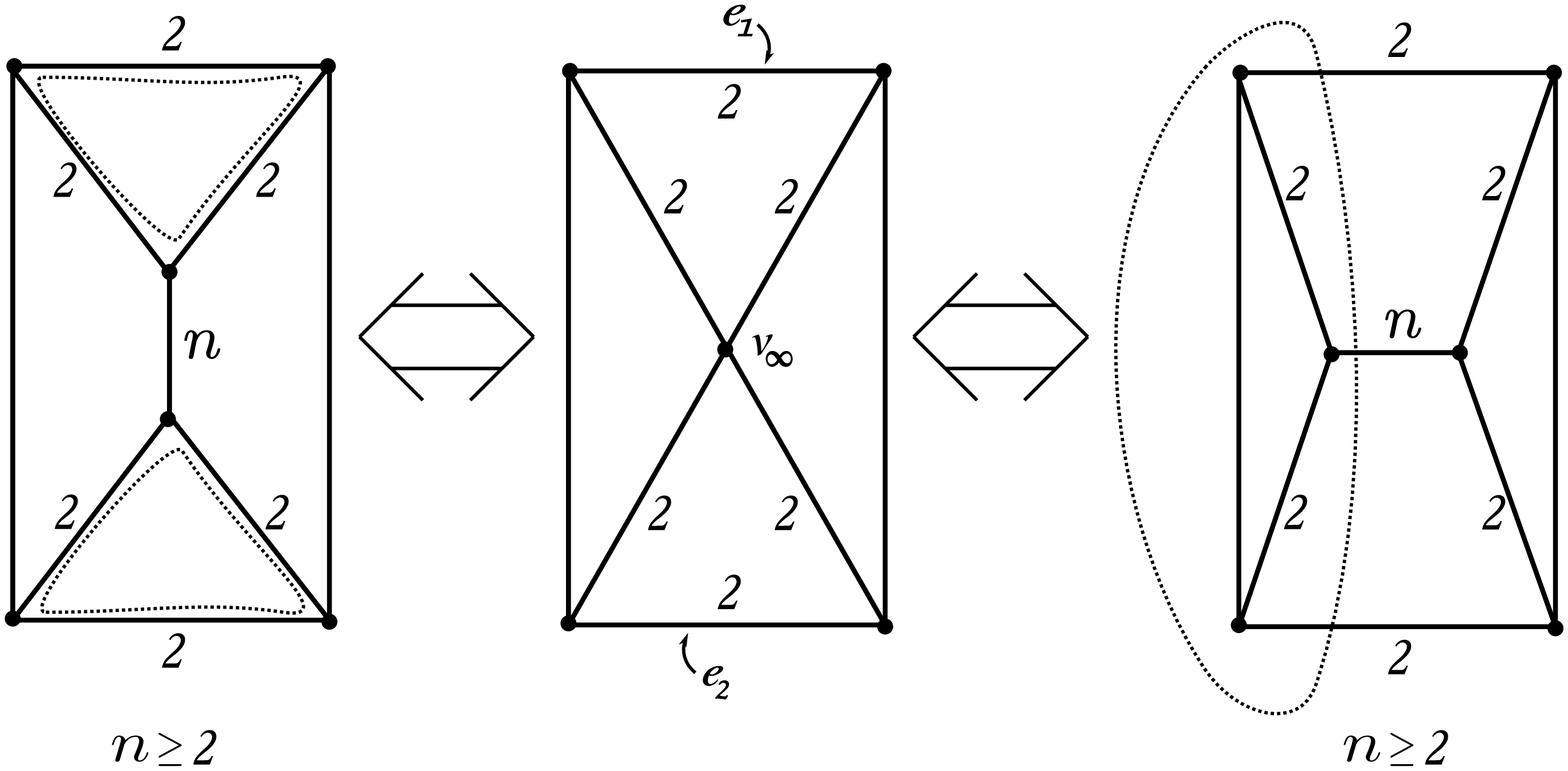}
\end{center}
\caption{Two possible positions of the contracted edge $e$. The forbidden $3$-circuit is dotted and forbidden prism bases are encircled by dotted lines} \label{prism_resolution}
\end{figure}

During the same deformation, the planes intersecting at $e$ become tangent to a point $v_{\infty} \in \partial {\mathbb{H}^3}$ at infinity. The point $v_{\infty}$ is a four-valent ideal vertex with right angles along the incident edges. Denote the resulting polyhedron by $\mathcal{P}_{\infty}$.

Since the contraction process deforms only one edge to a point, no new $3$- or $4$-circuits do appear in $\mathcal{P}_{\infty}$. Hence, for the existence of $\mathcal{P}_{\infty}\subset \mathbb{H}^3$ only condition $\mathfrak{m}_5$ of Andreev's theorem remains to be verified. 
Suppose that condition $\mathfrak{m}_5$ is violated and distinguish the following two cases for the polyhedron $\mathcal{P}$ leading to $\mathcal{P}_{\infty}$ under contraction of the edge $e$.

\smallskip
1.~\textit{$\mathcal{P}$ is a triangular prism}. There are two choices of the edge $e\in \Omega_1(\mathcal{P})$, that undergoes contraction to $v\in\Omega_{\infty}(\mathcal{P}_{\infty})$, as shown in Fig.~\ref{prism_resolution} on the left and on the right. Since  $\mathcal{P}_{\infty}$ is a Coxeter polyhedron, the violation of $\mathfrak{m}_5$ implies that the dihedral angles along the edges $e_1$ and $e_2$ have to equal $\pi/2$. But then, either condition $\mathfrak{m}_2$ or $\mathfrak{m}_4$ is violated, depending on the position of the edge $e$.

2.~\textit{Otherwise}, the two possible positions of the edge $e$ in Fig.~\ref{resolution_1} and Fig.~\ref{resolution_2}. The dihedral angles along the top and bottom edges are right, since $\mathfrak{m}_5$ is violated after contraction.

\begin{figure}[ht]
\begin{center}
\includegraphics* [totalheight=4.5cm]{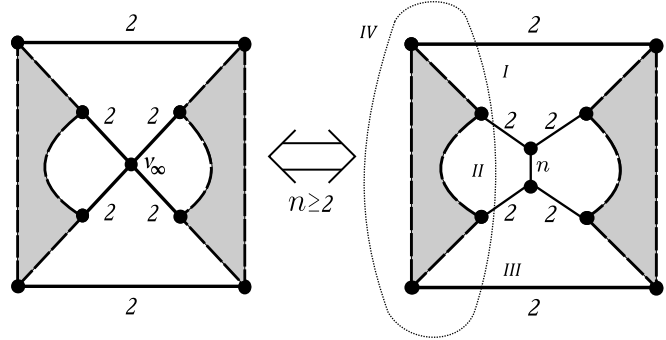}
\end{center}
\caption{The first possible position of the contracted edge $e$. The forbidden $4$-circuit is dotted. Face $IV$ is at the back of the picture} \label{resolution_1}
\end{figure}

2.1~\textit{Consider} the polyhedron $\mathcal{P}$ in Fig.~\ref{resolution_1} on the right. Since $\mathcal{P}$ is not a triangular prism, we may suppose (without loss of generality) that the faces $I$, $II$, $III$, $IV$ in the picture are separated by at least one more face lying in the left grey region. But then, the faces $I$, $II$, $III$ and $IV$ of $\mathcal{P}$ form a $4$-circuit violating condition $\mathfrak{m}_3$ of Andreev's theorem.

\begin{figure}[ht]
\begin{center}
\includegraphics* [totalheight=4.5cm]{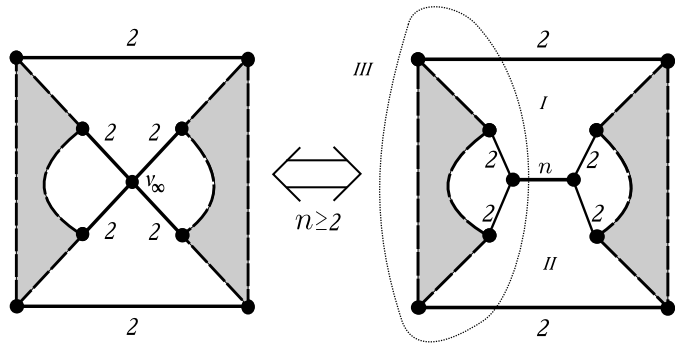}
\end{center}
\caption{The second possible position of the contracted edge $e$. The forbidden $3$-circuit is dotted. Face $III$ is at the back of the picture} \label{resolution_2}
\end{figure}

2.2~\textit{Consider} the polyhedron $\mathcal{P}$ on the right in Fig.~\ref{resolution_2}. As before, we may suppose that the faces $I$, $II$, $III$ form a $3$-circuit. This circuit violates condition $\mathfrak{m}_2$ of Andreev's theorem for $\mathcal{P}$.

Thus, the non-existence of $\mathcal{P}_{\infty}$ implies the non-existence of $\mathcal{P}$, and one arrives at a contradiction. 
\end{proof}

\medskip
\begin{note}
Proposition~\ref{edge_contraction} describes the unique way of ridge contraction. Indeed,
there is only one infinite family of distinct spherical Coxeter groups representing $Stab(v)$, where $v$ is a vertex of the ridge $e$, and this one is $\Delta_{2,2,n}$, $n\geq 2$. One may compare the above limiting process for hyperbolic Coxeter polyhedra with the limiting process for orientable hyperbolic 3-orbifolds from~\cite{Dunbar}.
\end{note}

\begin{proposition}\label{four_valent_vertex}
Let $\mathcal{P} \subset \mathbb{H}^3$ be a Coxeter polyhedron of finite volume
with at least one four-valent ideal vertex $v_{\infty}$. Then
there exists a sequence of finite-volume Coxeter polyhedra $\mathcal{P}(n)\subset \mathbb{H}^3$ having the same combinatorial type and dihedral angles as $\mathcal{P}$ except for a ridge of type $\langle2,2,n,2,2\rangle$, with $n$ sufficiently large, giving rise to the vertex $v_{\infty}$ under contraction.
\end{proposition}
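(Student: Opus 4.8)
The plan is to reverse the construction of Proposition~\ref{edge_contraction}: instead of deforming an angle to zero, I would start from the polyhedron $\mathcal{P}$ with its four-valent ideal vertex $v_\infty$ and open up a short edge $e$ at $v_\infty$, introducing a controllable dihedral angle $\frac{\pi}{n}$. First I would examine the local combinatorial structure at $v_\infty$. Since $v_\infty$ is four-valent, exactly four faces and four edges meet there, and by condition $(\mathfrak{m}_1)$ of Andreev's theorem the four dihedral angles sum to $2\pi$; as all angles are non-obtuse submultiples of $\pi$, each must equal $\frac{\pi}{2}$. I would then prescribe a new combinatorial type in which $v_\infty$ is replaced by a bounded edge $e$ with two trivalent endpoints $v,w$, inserting the new edge $e$ between a suitable pair of opposite faces at $v_\infty$ so that the resulting configuration around $e$ is precisely a ridge of type $\langle 2,2,n,2,2\rangle$ (four right angles at the ends and the angle $\frac{\pi}{n}$ along $e$ itself). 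This is the combinatorial inverse of the contraction, so the faces of $\mathcal{P}(n)$ are in bijection with those of $\mathcal{P}$, and the number of faces is preserved.

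The core of the argument is to verify, via Andreev's theorem, that $\mathcal{P}(n)$ actually exists in $\mathbb{H}^3$ for all sufficiently large $n$. I would set the dihedral angle along $e$ equal to $\frac{\pi}{n}$ and keep all other angles equal to those of $\mathcal{P}$, then check conditions $(\mathfrak{m}_0)$--$(\mathfrak{m}_5)$ one at a time. Condition $(\mathfrak{m}_0)$ is immediate since $\frac{\pi}{n}\le\frac{\pi}{2}$, and $(\mathfrak{m}_1)$ holds at $v$ and $w$ because the two trivalent angle sums are $\frac{\pi}{2}+\frac{\pi}{2}+\frac{\pi}{n}=\pi+\frac{\pi}{n}>\pi$, while all other vertex conditions are inherited from $\mathcal{P}$. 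For the circuit and prism conditions $(\mathfrak{m}_2)$--$(\mathfrak{m}_4)$, the key observation is monotonicity: any new or modified circuit through $e$ involves the small angle $\frac{\pi}{n}$, whose contribution only decreases as $n$ grows, so once these strict inequalities hold for large $n$ they persist; the circuits not meeting $e$ are unchanged from the (existing) limiting polyhedron $\mathcal{P}$.

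The main obstacle will be condition $(\mathfrak{m}_5)$ together with the possibility that opening the edge $e$ creates a spurious $3$- or $4$-circuit not present in $\mathcal{P}$, exactly the dangers catalogued in the case analysis of Proposition~\ref{edge_contraction}. To handle this I would argue contrapositively in the same spirit as that proof: if some obstruction to $\mathcal{P}(n)$ arose for all large $n$, then passing to the limit $n\to\infty$ (where $\frac{\pi}{n}\to 0$ and $e$ contracts back to $v_\infty$) would produce the corresponding forbidden configuration in $\mathcal{P}$ itself, contradicting the hypothesis that $\mathcal{P}$ exists in $\mathbb{H}^3$. In particular, a violation of $(\mathfrak{m}_5)$ after contraction, or a forbidden $3$-circuit or prism, would force $\mathcal{P}$ to violate $(\mathfrak{m}_2)$, $(\mathfrak{m}_3)$, or $(\mathfrak{m}_4)$, just as in the two cases analysed above. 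Since $\mathcal{P}$ is a genuine finite-volume Coxeter polyhedron, no such violation can occur, so all of Andreev's conditions hold for $\mathcal{P}(n)$ for $n$ sufficiently large, establishing existence. Finally, by construction the edge $e$ is a ridge of type $\langle 2,2,n,2,2\rangle$ that contracts to $v_\infty$ under the process of Proposition~\ref{edge_contraction}, which completes the proof. $\square$
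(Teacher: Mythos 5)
Your proposal is correct and follows essentially the same route as the paper: resolve $v_\infty$ into an edge $e$ with angle $\frac{\pi}{n}$ (in one of the two possible combinatorial ways), verify conditions $(\mathfrak{m}_0)$, $(\mathfrak{m}_1)$, $(\mathfrak{m}_5)$ directly for large $n$, and dispose of any new forbidden $3$-/$4$-circuit or prism configuration by contracting $e$ back to $v_\infty$ and exhibiting a forbidden configuration in $\mathcal{P}$, contradicting its existence. The only cosmetic slip is in your last paragraph: for this direction the paper's contract-back argument yields a violation of $(\mathfrak{m}_5)$ (or a non-geometric face pair) in $\mathcal{P}$, not of $(\mathfrak{m}_2)$--$(\mathfrak{m}_4)$ --- the latter attribution belongs to the proof of Proposition~\ref{edge_contraction} --- but the contradiction mechanism is identical.
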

\begin{proof}
Consider the four-valent ideal vertex $v_{\infty}$ of $\mathcal{P}$ and replace $v_{\infty}$ by an edge $e$ in one of the two ways as shown in Fig.~\ref{resolution} while keeping the remaining combinatorial elements of $\mathcal{P}$ unchanged. Let the dihedral angle along $e$ be equal to $\frac{\pi}{n}$, with $n\in\mathbb N$ sufficiently large. We denote this new polyhedron by $\mathcal{P}(n)$. The geometrical meaning of the ``edge contraction'' - ``edge insertion'' process is illustrated in Fig.~\ref{resolutiongeom}. We have to verify the existence of $\mathcal{P}(n)$ in $\mathbb{H}^3$. Conditions $\mathfrak{m}_0$ and $\mathfrak{m}_1$ of Andreev's theorem are obviously satisfied for $\mathcal{P}(n)$. Condition $\mathfrak{m}_5$ is also satisfied since $n$ can be taken large enough.

\begin{figure}[ht]
\begin{center}
\includegraphics* [totalheight=3cm]{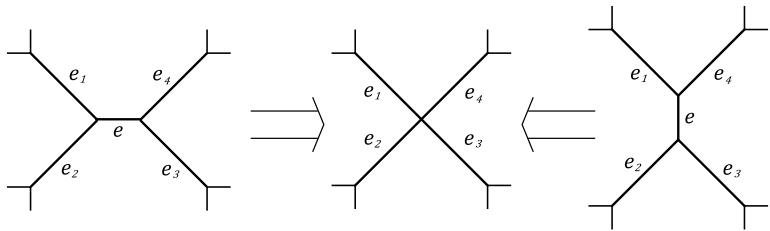}
\end{center}
\caption{Two possible ridges resulting in a four-valent vertex under contraction}\label{resolution}
\end{figure}

Suppose that one of the remaining conditions of Andreev's theorem is violated. The inserted edge $e$ of $\mathcal{P}(n)$ might appear in a new $3$- or $4$-circuit not present in $\mathcal{P}$ so that several cases are possible.

\begin{figure}[ht]
\begin{center}
\includegraphics* [totalheight=2cm]{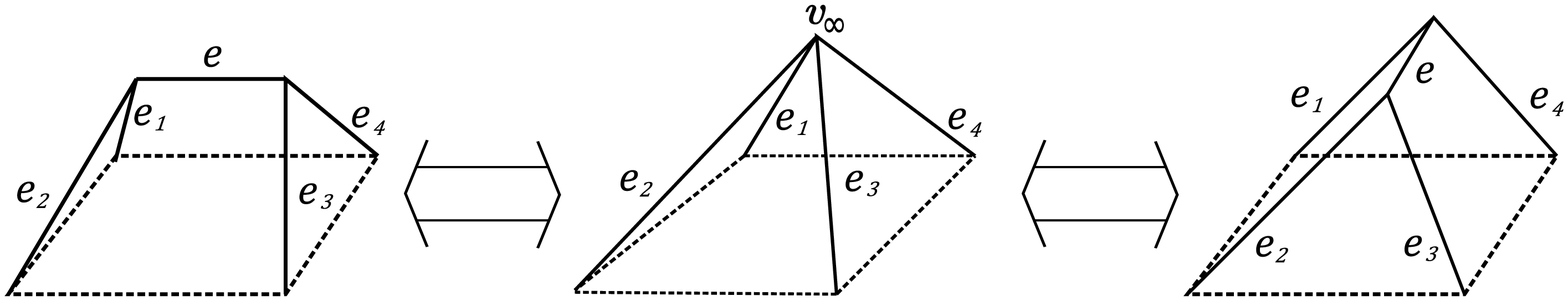}
\end{center}
\caption{Pushing together and pulling apart the supporting planes of polyhedron's faces results in an ``edge contraction''- ``edge insertion'' process} \label{resolutiongeom}
\end{figure}

1.~\textit{$\mathcal{P}(n)$ is a triangular prism}. The polyhedron $\mathcal{P}(n)$ violating condition $\mathfrak{m}_2$ of Andreev's theorem is illustrated in Fig.~\ref{prism_resolution} on the right. Since $\mathcal{P}(n)$ is Coxeter, the $3$-circuit depicted by the dashed line comprises the three edges in the middle, with dihedral angles $\frac{\pi}{n}$, $\frac{\pi}{2}$ and $\frac{\pi}{2}$ along them. Contracting the edge $e$ back to $v_{\infty}$, we observe that condition $\mathfrak{m}_5$ for the polyhedron $\mathcal{P}$ does not hold. 

Since there are no $4$-circuits, the only condition of Andreev's theorem for $\mathcal{P}(n)$, which might be yet violated, is $\mathfrak{m}_4$. This case is depicted in Fig.~\ref{prism_resolution} on the left. A similar argument as above leads to a contradiction.

\smallskip
2.~\textit{Otherwise}, we consider the remaining unwanted cases, when either condition $\mathfrak{m}_2$ or condition $\mathfrak{m}_3$ is violated.

\smallskip
2.1~\textit{Case of a $3$-circuit}. In Fig.~\ref{three_circuit_1} and Fig.~\ref{three_circuit_2}, we illustrate two possibilities to obtain a $3$-circuit in $\mathcal{P}(n)$ for all $n$ sufficiently large, which violates condition $\mathfrak{m}_2$ of Andreev's theorem. The faces of the $3$-circuit are indicated by $I$, $II$ and $III$. In  Fig.~\ref{three_circuit_1}, the edge $e$ is ``parallel'' to the circuit, meaning that $e$ belongs to precisely one of the faces $I$, $II$ or $III$.

\begin{figure}[ht]
\begin{center}
\includegraphics* [totalheight=5cm]{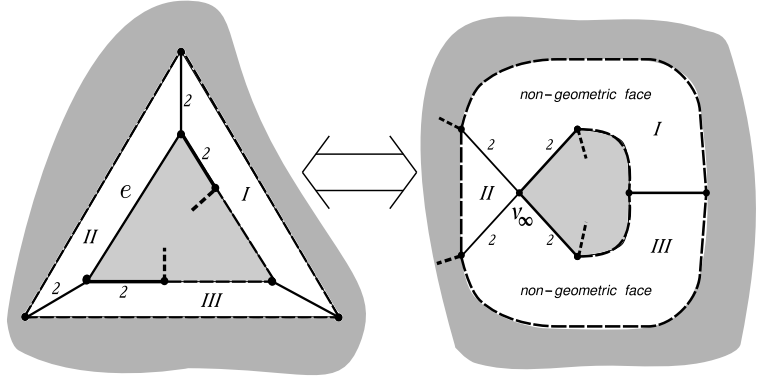}
\end{center}
\caption{Forbidden $3$-circuit: the first case} \label{three_circuit_1}
\end{figure}

\begin{figure}[ht]
\begin{center}
\includegraphics* [totalheight=4.5cm]{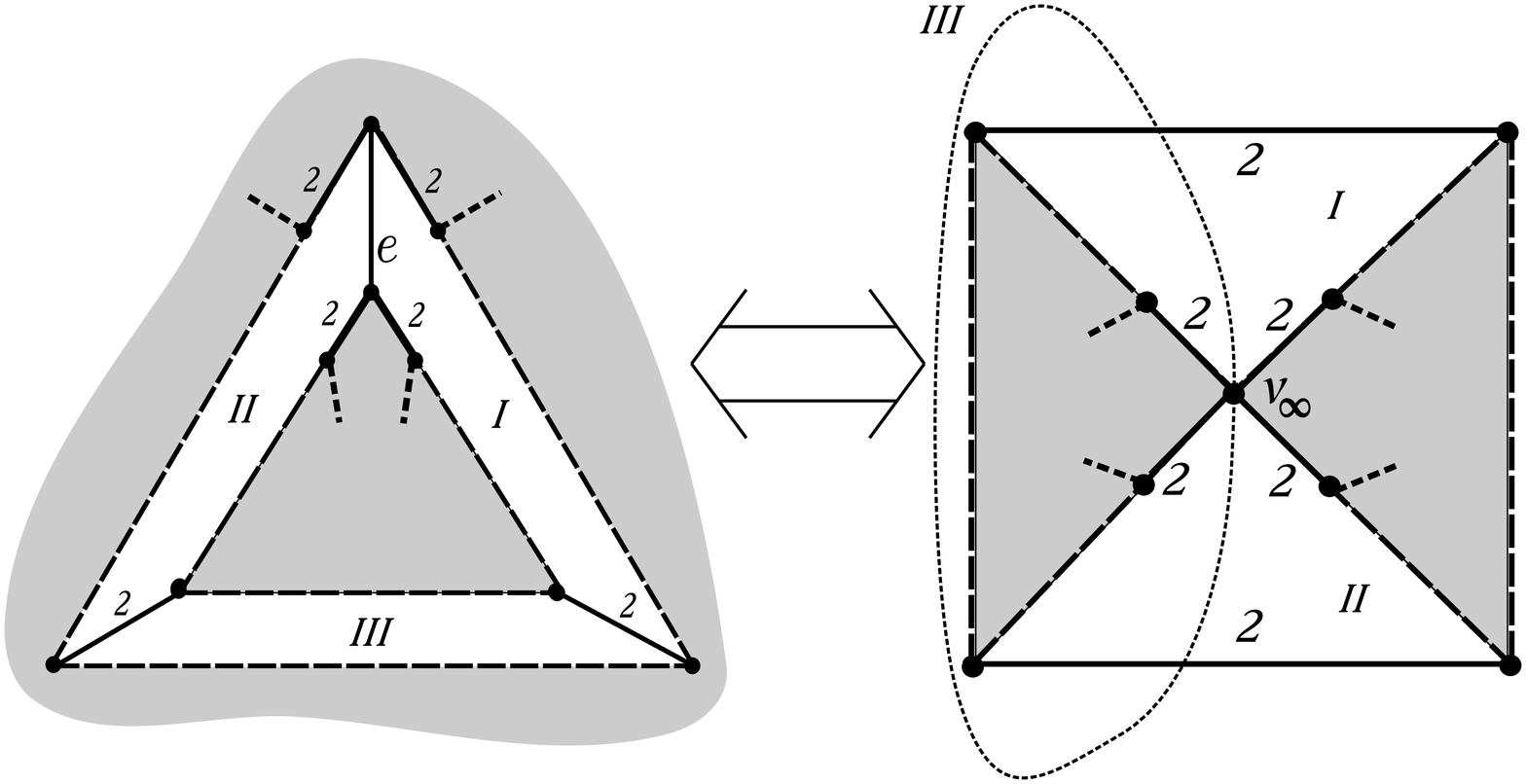}
\end{center}
\caption{Forbidden $3$-circuit: the second case. The forbidden circuit going through the ideal vertex is dotted. Face $III$ is at the back of the picture} \label{three_circuit_2}
\end{figure}

In Fig.~\ref{three_circuit_2}, the edge $e$ is ``transversal'' to the circuit, meaning that $e$ is the intersection of precisely two of the faces $I$, $II$ or $III$. Contracting $e$ back to $v_{\infty}$ leads to an obstruction for the given polyhedron $\mathcal{P}$ to exist, as illustrated in Fig.~\ref{three_circuit_1} and Fig.~\ref{three_circuit_2} on the right. The polyhedron $\mathcal{P}$ in Fig.~\ref{three_circuit_1} has two non-geometric faces, namely $I$ and $III$, having in common precisely one edge and the vertex $v_{\infty}$ disjoint from it. 
The polyhedron $\mathcal{P}$ in Fig.~\ref{three_circuit_2} violates condition $\mathfrak{m}_5$ of Andreev's theorem because of the triple, that consists of the faces $I$, $II$ and $III$ (in Fig.~\ref{three_circuit_2} on the right, the face $III$ is at the back of the picture).

\smallskip
2.2~\textit{Case of a $4$-circuit}. First, observe that the sum of dihedral angles along the edges involved in a $4$-circuit transversal to the edge $e$ does not exceed $\frac{3\pi}{2} + \frac{\pi}{n}$, and therefore is less than $2\pi$ for all $n>2$. This means condition $\mathfrak{m}_3$ of Andreev's theorem is always satisfied for $n$ sufficiently large. 

\begin{figure}[ht]
\begin{center}
\includegraphics* [totalheight=4.5cm]{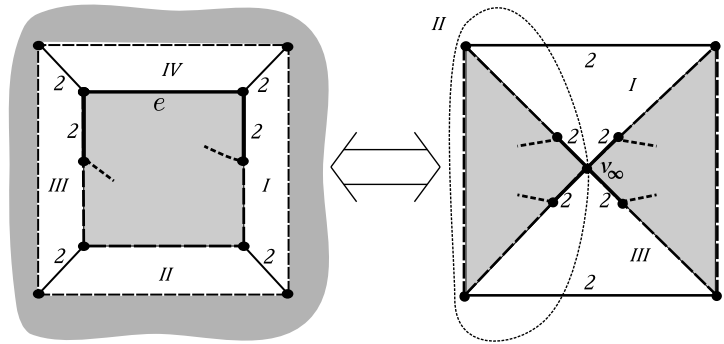}
\end{center}
\caption{Forbidden $4$-circuit. The forbidden circuit going through the ideal vertex is dotted. Face $II$ is at the back of the picture} \label{four_circuit}
\end{figure}

Finally, a $4$-circuit parallel to the edge $e$ in $\mathcal{P}(n)$ is illustrated in Fig.~\ref{four_circuit}. The faces in this $4$-circuit are indicated by $I$, $II$, $III$, $IV$. Suppose that the $4$-circuit violates condition $\mathfrak{m}_3$. Contracting $e$ back to $v_{\infty}$ (see Fig.~\ref{four_circuit} on the right) leads to a violation of $\mathfrak{m}_5$ for $\mathcal{P}$ because of the circuit, that consists of the faces $I$, $II$ and $III$ (in Fig.~\ref{four_circuit} on the right, the face $II$ is at the back of the picture). 
\end{proof}

\medskip
\begin{note}
The statements of Proposition~\ref{edge_contraction} and Proposition~\ref{four_valent_vertex} are essentially given in \cite[p.~238]{Vinberg2} without proof. In the higher-dimensional case, no codimension two face contraction is possible. Indeed, the contraction process produces a finite-volume polytope $\mathcal{P}_{\infty} \subset \mathbb{H}^n$, $n \geq 4$, whose volume is a limit point for the set of volumes of $\mathcal{P}(k)\subset \mathbb{H}^n$ as $k \rightarrow \infty$.  But, by the theorem of H.-C. Wang \cite[Theorem~3.1]{Vinberg2}, the set of volumes of Coxeter polytopes in $\mathbb{H}^n$ is discrete if $n \geq 4$.
\end{note}

\section{Limiting growth rates of Coxeter groups acting on $\mathbb{H}^{3}$}

The result of this section is inspired by W.~Floyd's work \cite{Floyd} on planar hyperbolic Coxeter groups. We consider a sequence of compact polyhedra $\mathcal{P}(n)\subset\mathbb{H}^{3}$ with a ridge of type $\langle2,2,n,2,2\rangle$ converging, as $n \rightarrow \infty$, to a polyhedron $\mathcal{P}_{\infty}$ with a single four-valent ideal vertex. According to \cite{Parry}, all the growth rates of the corresponding reflection groups $G(\mathcal{P}(n))$ are Salem numbers. Our aim is to show that the limiting growth rate is a Pisot number.

\smallskip
The following definition will help us to make the technical proofs more transparent 
when studying the analytic behaviour of growth functions.

\medskip
\begin{definition}
{\it For a given Coxeter group $G$ with generating set $S$ and growth function $f(t) = f_{S}(t)$, define $F(t) = F_{S}(t) := \frac{1}{f_{S}(t^{-1})}$.}
\end{definition}

\begin{proposition}\label{GE_ideal}
Let $\mathcal{P}_{\infty} \subset \mathbb{H}^3$ be a finite-volume Coxeter polyhedron with at least one four-valent ideal vertex obtained from a sequence of finite-volume Coxeter polyhedra $\mathcal{P}(n)$ by contraction of a ridge of type $\langle 2,2,n,2,2 \rangle$ as $n\rightarrow\infty$. Denote by $f_n(t)$ and $f_\infty(t)$ the growth functions of $G(\mathcal{P}(n))$ and $G(\mathcal{P}_\infty)$, respectively. Then
\begin{equation*}
\frac{1}{f_n(t)}-\frac{1}{f_{\infty}(t)} = \frac{t^n}{1-t^n}\left(\frac{1-t}{1+t}\right)^2.
\end{equation*}
Moreover, the growth rate $\tau_n$ of $G(\mathcal{P}(n))$ converges to the growth rate $\tau_{\infty}$ of $G(\mathcal{P}_{\infty})$ from below.
\end{proposition}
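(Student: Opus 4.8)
The plan is to work throughout with the reciprocal quantity $F(t)=1/f(t^{-1})$ and to compute the \emph{difference} $F_n(t)-F_\infty(t)$ straight from Steinberg's formula (Theorem~\ref{Steinberg_formula}), exploiting the fact that $\mathcal{P}(n)$ and $\mathcal{P}_\infty$ differ combinatorially and metrically only near the ridge $e$. For a three-dimensional Coxeter polyhedron, Steinberg's alternating sum $\sum_{T\in\mathcal{F}}(-1)^{|T|}/f_T(t)$ has nonempty terms indexed by the faces (rank one, group $A_1$), the edges carrying a finite dihedral group (rank two, group $I_2(m)$), and the \emph{compact} vertices (rank three, the spherical groups of Table~\ref{tabular_for_GF_representation}); an ideal vertex has an infinite Euclidean stabiliser and contributes nothing. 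The point is that I never need the closed form of either $F_n$ or $F_\infty$: in the difference, every term supported away from $e$ cancels, since the two polyhedra share all faces, all unaffected edges with identical labels, and all unaffected compact vertices.

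By the local picture established in Propositions~\ref{edge_contraction} and~\ref{four_valent_vertex}, the only surviving terms are those born on the ridge. In $\mathcal{P}(n)$ these are the rank-two term of $e$, whose dihedral group is $I_2(n)$, together with the rank-three terms of its two trivalent endpoints $v,w$, each with stabiliser $\Delta_{2,2,n}$. In $\mathcal{P}_\infty$ the edge $e$ has collapsed (the two faces meeting along it now touch only at $v_\infty$, hence generate an infinite group), while $v,w$ have merged into the four-valent ideal vertex $v_\infty$, which contributes nothing. One must verify that the contraction creates no new edge, circuit, or finite vertex group — in particular that the faces that become opposite across $v_\infty$ stay non-adjacent — which is exactly the combinatorial content secured by the circuit analysis in the two preceding propositions. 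This yields
\begin{equation*}
F_n(t)-F_\infty(t)=\frac{1}{f_{I_2(n)}(t)}-\frac{2}{f_{\Delta_{2,2,n}}(t)}.
\end{equation*}

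The rest is bookkeeping. Using the Poincaré polynomials $f_{I_2(n)}(t)=(1+t)\frac{1-t^n}{1-t}$ and $f_{\Delta_{2,2,n}}(t)=(1+t)^2\frac{1-t^n}{1-t}$ (degrees $2,n$ and $2,2,n$ respectively), factoring out $\frac{1-t}{(1+t)^2(1-t^n)}$ collapses the right-hand side to $-\frac{1}{1-t^n}\bigl(\frac{1-t}{1+t}\bigr)^2$. Since $F(t)=1/f(t^{-1})$, replacing $t$ by $t^{-1}$ and using $\frac{1-t^{-1}}{1+t^{-1}}=\frac{t-1}{t+1}$ together with $\frac{1}{1-t^{-n}}=-\frac{t^n}{1-t^n}$ turns this into the asserted identity for $1/f_n(t)-1/f_\infty(t)$.

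For the final assertion, set $D_n(t)=\frac{t^n}{1-t^n}\bigl(\frac{1-t}{1+t}\bigr)^2$, which is strictly positive on $(0,1)$. As $1/f$ has non-negative power-series coefficients, Pringsheim's theorem identifies the reciprocal $x:=1/\tau$ of the growth rate as the smallest positive zero of $1/f$, with $1/f>0$ on $(0,x)$. From $1/f_n=1/f_\infty+D_n$ and $D_n>0$ it follows that $1/f_n$ remains positive on $(0,x_\infty]$, so its smallest positive zero satisfies $x_n>x_\infty$, i.e.\ $\tau_n<\tau_\infty$. Since $D_n\to0$ locally uniformly on $(0,1)$, combining this with the sign change of $1/f_\infty$ immediately past its simple dominant pole $x_\infty$ confines $x_n$ to $(x_\infty,x_\infty+\varepsilon)$ for every $\varepsilon>0$ and large $n$, whence $\tau_n\nearrow\tau_\infty$. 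I expect the genuine obstacle to be the localisation step: guaranteeing that contracting $e$ perturbs \emph{precisely} the edge $e$ and the vertices $v,w$ and nothing else, so that all remaining Steinberg terms cancel verbatim — this is where Propositions~\ref{edge_contraction} and~\ref{four_valent_vertex} are really used, the subsequent algebra and the convergence argument (modulo the standard simplicity of the dominant pole) being routine.
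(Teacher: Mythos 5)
Your proposal is correct and takes essentially the same route as the paper: both localise Steinberg's formula (Theorem~\ref{Steinberg_formula}) at the ridge so that $F_n(t)-F_\infty(t)=\frac{1}{f_e(t)}-\frac{1}{f_{v_1}(t)}-\frac{1}{f_{v_2}(t)}$ with $f_e(t)=(1+t)\frac{1-t^n}{1-t}$ and $f_{v_i}(t)=(1+t)^2\frac{1-t^n}{1-t}$, then substitute $t\mapsto t^{-1}$ to obtain the stated identity, and deduce $\tau_n<\tau_\infty$ from positivity of the difference on $(0,1)$ and $\tau_n\to\tau_\infty$ from its locally uniform vanishing there. One slip of the pen: it is $f$, not $1/f$, that has non-negative power-series coefficients, so Pringsheim's theorem identifies $1/\tau$ as the smallest positive pole of $f$ (equivalently, using $f(0)=1$, the smallest positive zero of $1/f$), which is exactly how the paper then argues.
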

\begin{proof}
We calculate the difference of $F_n(t)$ and $F_{\infty}(t)$ by means of equation~(\ref{eqSteinberg}). In fact, this difference is caused only by the stabilisers of the ridge $e \in \Omega_1(\mathcal{P}(n))$ and of its vertices $v_i \in \Omega_0(\mathcal{P}(n))$, $i=1,2$. Let $[k] := 1+\dots+t^{k-1}$. Here $Stab(e) \simeq D_n$, the dihedral group of order $2n$, and $Stab(v_i) \simeq \Delta_{2,2,n}$. The corresponding growth functions are given by $f_e(t) = [2][n]$ and $f_{v_i}(t) = [2]^2[n]$, $i=1,2$ (see \cite{Solomon}). Thus
\begin{equation}\label{dgrowth}
F_n(t) - F_{\infty}(t) = \frac{1}{f_{e}(t)} -\frac{1}{f_{v_1}(t)} - \frac{1}{f_{v_2}(t)} = \frac{1}{t^n - 1} \left(\frac{t-1}{t+1}\right)^2.
\end{equation}
Next, perform the substitution $t \rightarrow t^{-1}$ on (\ref{dgrowth}) and use the relation between $F_n(t)$, $F_{\infty}(t)$ and their counterparts $f_n(t)$ and $f_{\infty}(t)$ according to the definition above. As a result, we obtain the desired formula, which yields $\frac{1}{f_n(t)}-\frac{1}{f_{\infty}(t)} > 0$ for $t \in (0,1)$.

Consider the growth rates $\tau_n$ and $\tau_{\infty}$ of $G(\mathcal{P}(n))$ and $G(\mathcal{P}_{\infty})$. The least positive pole of $f_n(t)$ is the least positive zero of $\frac{1}{f_n(t)}$, and $f_n(0) = 1$. Similar statements hold for $f_{\infty}(t)$. Hence, by the inequality above and by the definition of growth rate, we obtain  $\tau^{-1}_n > \tau^{-1}_{\infty}$, or $\tau_n < \tau_{\infty}$, as claimed.

Finally, the convergence  $\tau_n \rightarrow \tau_{\infty}$ as $n\rightarrow \infty$
follows from the convergence $\frac{1}{f_n(t)}-\frac{1}{f_{\infty}(t)} \rightarrow 0$ on $(0,1)$, due to the first part of the proof.
\end{proof}

\medskip
\begin{note}
Given the assumptions of Proposition~\ref{GE_ideal}, the volume of $\mathcal{P}(n)$ is less than that of $\mathcal{P}_{\infty}$ by Schl\"{a}fli's volume differential formula \cite{MilnorSchlaefli}. Thus, growth rate and volume are both increasing under contraction of a ridge.
\end{note}

\medskip
Consider two Coxeter polyhedra $\mathcal{P}_1$ and $\mathcal{P}_2$ in $\mathbb H^3$
having the same combinatorial type and dihedral angles
except for the respective ridges $H_1 = \langle k_1,k_2, n_1, l_1, l_2 \rangle$ and $H_2 = \langle k_1, k_2, n_2, l_1, l_2 \rangle$.

\smallskip
\begin{definition}
{\it We say that $H_1 \prec H_2$ if and only if $n_1 < n_2$.}
\end{definition}

\medskip
The following proposition extends  Proposition~\ref{GE_ideal} to a more general context.

\begin{proposition}\label{GE_growth}
Let $\mathcal{P}_1$ and $\mathcal{P}_2$ be two compact hyperbolic Coxeter polyhedra having the same combinatorial type and dihedral angles except for an edge of ridge type $H_1$ and $H_2$, respectively. If $H_1 \prec H_2$, then the growth rate of $G(\mathcal{P}_1)$ is less than the growth rate of $G(\mathcal{P}_2)$.
\end{proposition}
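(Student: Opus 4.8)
The plan is to adapt the computation of Proposition~\ref{GE_ideal}, replacing the ``compact versus ideal'' comparison by a comparison of two compact polyhedra and exploiting that $\mathcal{P}_1$ and $\mathcal{P}_2$ differ only along the ridge. Since both polyhedra are compact, Parry's identity (Theorem~\ref{GF_representation}) applies to each, giving $F_i(t) = \frac{t-1}{t+1} + \sum_{u\in\Omega_0(\mathcal{P}_i)} g_u^{(i)}(t)$ for $i=1,2$, where each $g_u$ depends only on $Stab(u)$ through~(\ref{eqParry2}). The two polyhedra have identical combinatorial type and identical dihedral angles apart from the angle $\frac{\pi}{n}$ along the common edge $e$ of the ridge, and the only vertices incident to $e$ are its two endpoints $v,w$. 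Hence every summand $g_u$ with $u\neq v,w$ is unchanged and cancels, the $\frac{t-1}{t+1}$ terms cancel, and I obtain
\[
F_1(t) - F_2(t) = \bigl(g_v^{(1)}(t)-g_v^{(2)}(t)\bigr) + \bigl(g_w^{(1)}(t)-g_w^{(2)}(t)\bigr),
\]
the edge contribution being already absorbed into the vertex terms through the closed form~(\ref{eqParry2}).

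Next I would determine the sign of this difference on $(1,\infty)$. Writing $h(m,t) := \frac{t^m-1}{t^{m+1}-1}$, each $g_u$ equals $\frac{t(1-t)}{2}\,h(m_1,t)\,h(m_2,t)\,h(m_3,t)$, where $m_1,m_2,m_3$ are the Coxeter exponents of $Stab(u)$. A one-line computation gives that $\frac{h(m+1,t)}{h(m,t)}-1$ has numerator $t^{m}(t-1)^2>0$, so $h(\cdot,t)$ is strictly increasing in $m$ for every $t\neq 1$, and $h(m,t)>0$ for $t>1$. Consulting Table~\ref{tabular_for_GF_representation}, the admissible finite vertex groups $Stab(v)=\Delta_{k_1,k_2,n}$ and $Stab(w)=\Delta_{l_1,l_2,n}$ that remain finite as $n$ grows have Coxeter exponents increasing coordinate-wise with $n$ (the family $\Delta_{2,2,n}$ with exponents $1,1,n-1$, and the chain $\Delta_{2,3,3}\to\Delta_{2,3,4}\to\Delta_{2,3,5}$ with exponents $(1,2,3)\to(1,3,5)\to(1,5,9)$). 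Therefore, for $t>1$, the product $h(m_1,t)h(m_2,t)h(m_3,t)$ increases with $n$ while the prefactor $\frac{t(1-t)}{2}$ is negative, so each $g_u$ decreases with $n$. As $n_1<n_2$, both $g_v^{(1)}-g_v^{(2)}$ and $g_w^{(1)}-g_w^{(2)}$ are positive, whence $F_1(t)-F_2(t)>0$ for all $t>1$.

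Finally, performing the substitution $t\rightarrow t^{-1}$ exactly as in Proposition~\ref{GE_ideal} turns this into $\frac{1}{f_1(t)}-\frac{1}{f_2(t)} = F_1(t^{-1})-F_2(t^{-1})>0$ for $t\in(0,1)$, since $t^{-1}>1$ there. I would then conclude verbatim as in Proposition~\ref{GE_ideal}: the functions $1/f_i$ take the value $1$ at $t=0$, and $\tau_i^{-1}$ is the least positive zero of $1/f_i$; the strict inequality $1/f_1>1/f_2$ on $(0,1)$ forces the least zero of $1/f_1$ to exceed that of $1/f_2$, that is $\tau_1^{-1}>\tau_2^{-1}$, which is $\tau_1<\tau_2$.

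The main obstacle I anticipate is securing the sign of $F_1-F_2$ \emph{uniformly}. The monotonicity of $h(\cdot,t)$ in $m$ is immediate, but one must check that passing from $H_1$ to $H_2$ genuinely raises every Coxeter exponent of each of the two vertex groups; because only finitely many spherical triangle groups are admissible, this is a case check against Table~\ref{tabular_for_GF_representation} rather than a single generic family argument. Once this coordinate-wise monotonicity is in place, the rest is a routine adaptation of Proposition~\ref{GE_ideal}.
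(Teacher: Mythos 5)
Your proposal is correct, and it follows the paper's skeleton while replacing the paper's key verification step by a different, more uniform argument. Like the paper, you invoke Parry's identity (Theorem~\ref{GF_representation}) for both compact polyhedra, localize the difference $F_1(t)-F_2(t)$ at the two ridge vertices (correctly noting that formula~(\ref{eqParry1}) has no separate edge term, unlike the Steinberg-formula computation used for Proposition~\ref{GE_ideal}), substitute $t\mapsto t^{-1}$, and conclude about the growth rates exactly as in Proposition~\ref{GE_ideal}. The divergence is in how positivity of the difference is established: the paper reduces, up to permutations, to admissible \emph{consecutive} ridge pairs, enumerates them via Table~\ref{tabular_for_GF_representation}, and records the explicit rational functions $\frac{1}{f_1(t)}-\frac{1}{f_2(t)}$ in Tables~\ref{tabular_for_GE_growth1}--\ref{tabular_for_GE_growth2}, each visibly positive on $(0,1)$, with the general case $n_1<n_2$ following by transitivity; you instead prove a monotonicity lemma, namely that $h(m,t)=\frac{t^m-1}{t^{m+1}-1}$ is increasing in $m$ (your identity $(t^{m+1}-1)^2-(t^{m+2}-1)(t^m-1)=t^m(t-1)^2$ is correct, and for $t>1$ the denominator $(t^{m+2}-1)(t^m-1)$ is positive, so the sign conclusion holds), and combine it with the coordinate-wise growth of the Coxeter exponents along the admissible chains. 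This buys a single conceptual argument that treats arbitrary $n_1<n_2$ at once, at the price of the finite case check you flag yourself; for completeness, that check must also include the cross-family transitions occurring when the ridge parameter passes from $n=2$ to $n=3$ with $\{k_1,k_2\}=\{2,3\},\{2,4\},\{2,5\}$, i.e.\ $\Delta_{2,2,3}\to\Delta_{2,3,3}$ with exponents $(1,1,2)\to(1,2,3)$, $\Delta_{2,2,4}\to\Delta_{2,3,4}$ with $(1,1,3)\to(1,3,5)$, and $\Delta_{2,2,5}\to\Delta_{2,3,5}$ with $(1,1,4)\to(1,5,9)$, all of which are indeed coordinate-wise increasing, so your argument goes through. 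What the paper's route buys in exchange is explicit closed forms for the differences, e.g.\ $\frac{t^n(1-t)^3}{(1-t^n)(1-t^{n+1})(1+t)^2}$ for the family $\langle2,2,n,2,2\rangle$, which your lemma reproduces in spirit but never displays.
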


\begin{proof}
Denote by $f_1(t)$ and $f_2(t)$
the growth functions of $G(\mathcal{P}_1)$ and $G(\mathcal{P}_2)$, respectively. As before, we will show that $\frac{1}{f_1(t)}-\frac{1}{f_2(t)} \geq 0$ on $(0, 1)$. 

Without loss of generality, we may suppose the ridges $H_i$ to be of type $\langle k_1, k_2, n_i, l_1, l_2 \rangle$, $i=1,2$, up to a permutation of the sets $\{k_1, k_2\}$, $\{l_1, l_2\}$ and $\{\{k_1, k_2\}, \{l_1, l_2\}\}$. By means of Table~\ref{tabular_for_GF_representation} showing all the finite triangle reflection groups, all admissible ridge pairs can be determined. We collected them in Tables~\ref{tabular_for_GE_growth1}--\ref{tabular_for_GE_growth2} at the last pages of the paper. The rest of the proof, starting with the computation of $\frac{1}{f_1(t)}-\frac{1}{f_2(t)}$ in accordance with Theorem~\ref{GF_representation}, equations~(\ref{eqParry1}) and (\ref{eqParry2}), follows by analogy to Proposition~\ref{GE_ideal}.
\end{proof}

\medskip
From now on $\mathcal{P}(n)$ always denotes a sequence of compact polyhedra in $\mathbb H^3$
having a ridge of type $\langle2,2,n,2,2\rangle$, with $n$ sufficiently large, that converges to a polyhedron $\mathcal{P}_{\infty}$ with a single four-valent ideal vertex. The corresponding growth functions for the groups $G(\mathcal{P}_n)$ and $G(\mathcal{P}_{\infty})$ are denoted by $f_n(t)$ and $f_{\infty}(t)$. As above, we will work with the functions $F_n(t)$ and $F_{\infty}(t)$. By Theorem~\ref{reciprocity}, both $f_n(t)$ and $F_n(t)$ are anti-reciprocal rational functions. 

\smallskip
Consider the denominator of the right-hand side of Steinberg's formula (c.f. Theorem~\ref{Steinberg_formula}). According to
\cite[Section 5.2.2]{CLS} and \cite[Section 2.1]{KP}, we introduce the following concept.

\medskip
\begin{definition}
{\it The least common multiple of the polynomials $f_T(t)\,,\,T \in \mathcal{F}\,,$ is called the virgin form of the numerator of  ${f_{S}(t^{-1})} $}.
\end{definition}

\smallskip
The next result describes the virgin form of the denominator of $F_{\infty}(t)$.
\begin{proposition}\label{GF_ideal}
Let $\mathcal{P}_{\infty}\subset\mathbb H^3$ be a polyhedron of finite volume with a single four-valent ideal
vertex. Then the
function $F_{\infty}(t)$ related to the Coxeter group $G(\mathcal{P}_{\infty})$ is given by
\begin{equation*}
F_{\infty}(t) = \frac{t(t-1)P_{\infty}(t)}{Q_{\infty}(t)},
\end{equation*}
where $Q_{\infty}(t)$ is a product of cyclotomic polynomials, $\mathrm{deg}\,Q_{\infty}(t) - \mathrm{deg}\,P_{\infty}(t) = 2$, and $P_{\infty}(0) \neq 0$, $P_{\infty}(1) < 0$.
\end{proposition}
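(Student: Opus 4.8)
The plan is to read $F_{\infty}(t)$ straight off Steinberg's formula (Theorem~\ref{Steinberg_formula}) and then to control its zeros, poles and degrees. First I would enumerate the finite special subgroups $T\in\mathcal F$ of $G(\mathcal P_{\infty})$: the trivial group, the single reflections, the dihedral stabilisers of the bounded edges, and the spherical triangle stabilisers of the ordinary (compact) vertices. The decisive point is that the four-valent ideal vertex $v_{\infty}$ contributes \emph{nothing} to the sum in~(\ref{eqSteinberg}), because its stabiliser is a Euclidean (hence infinite) reflection group, and the two ``diagonal'' faces meeting only at $v_{\infty}$ generate an infinite dihedral group. By Solomon's formula \cite{Solomon} every $f_T(t)$ is a product of factors $[k]=1+\dots+t^{k-1}$, each splitting into cyclotomic polynomials. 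Putting the right-hand side of~(\ref{eqSteinberg}) over the common denominator $Q_{\infty}(t):=\mathrm{lcm}\{f_T(t)\mid T\in\mathcal F\}$ (its virgin form) thus yields $F_{\infty}(t)=N(t)/Q_{\infty}(t)$ with $Q_{\infty}$ a product of cyclotomic polynomials and $N\in\mathbb Z[t]$.

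Next I would pin down $N$ by evaluating $F_{\infty}$ at $0$, $1$ and $\infty$, comparing $\mathcal P_{\infty}$ with the compact polyhedra $\mathcal P(n)$ through Proposition~\ref{GE_ideal}. Since $f_{\infty}(0)=1$ (the empty word), $F_{\infty}(\infty)=1/f_{\infty}(0)=1$, so $\deg N=\deg Q_{\infty}$. Using Parry's identity~(\ref{eqParry1})--(\ref{eqParry2}), where the vertex terms $g_v$ vanish at $0$ and at $1$, one gets $F_n(0)=-1$ and $F_n(1)=0$; the correction term of~(\ref{dgrowth}) equals $-1$ at $t=0$ and $0$ at $t=1$, whence $F_{\infty}(0)=F_n(0)+1=0$ and $F_{\infty}(1)=F_n(1)=0$. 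As $Q_{\infty}(0)=1$ and $Q_{\infty}(1)>0$ (no factor $\Phi_1=t-1$ occurs, because $f_T(1)=|W_T|\neq0$), both zeros lie in $N$; since $t$ and $t-1$ are coprime, $t(t-1)\mid N$. Writing $N=t(t-1)P_{\infty}$ with $P_{\infty}\in\mathbb Z[t]$ (Gauss), the degree equality becomes $\deg Q_{\infty}-\deg P_{\infty}=2$, and $F_{\infty}(\infty)=1$ forces $P_{\infty}$ to be monic — which is exactly what Lemma~\ref{pisot} will later require.

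It remains to settle the signs, which I would obtain from first-order expansions at $t=0$ and $t=1$. Differentiating~(\ref{eqSteinberg}) at $t=1$ and using $f_T'(1)/f_T(1)=N_T/2$, where $N_T$ is the number of reflections in $W_T$, gives $F_{\infty}'(1)=-\tfrac12\sum_{T\in\mathcal F}(-1)^{|T|}N_T/|W_T|$; comparing once more with $\mathcal P(n)$ (the ridge vertices of type $\Delta_{2,2,n}$ contribute $\tfrac{n-1}{4n}$ each and cancel the $n$-dependence), this reduces to $F_{\infty}'(1)=-\tfrac12\bigl(\sum_v\prod_i\tfrac{m_i}{m_i+1}-\tfrac12\bigr)$, the sum running over the compact vertices $v$ with exponents $m_i=m_i(v)$. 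Since $F_{\infty}(1)=0$, one has $P_{\infty}(1)=Q_{\infty}(1)\,F_{\infty}'(1)$, so with $Q_{\infty}(1)>0$ the desired $P_{\infty}(1)<0$ is equivalent to $\sum_v\prod_i\tfrac{m_i}{m_i+1}>\tfrac12$. An analogous expansion at $t=0$, using $f_T'(0)=|T|$, gives $F_{\infty}'(0)=\tfrac12 V_f-1=-P_{\infty}(0)$, where $V_f$ is the number of compact vertices of $\mathcal P_{\infty}$.

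Thus everything comes down to a vertex count, which I expect to be the main obstacle. Each summand $\prod_i\tfrac{m_i}{m_i+1}$ is at least $\tfrac18$ (equality only for $\Delta_{2,2,2}$), so I need $V_f\ge5$ to force both $\sum_v\prod_i\tfrac{m_i}{m_i+1}\ge V_f/8>\tfrac12$ (giving the simple zero at $1$ and $P_{\infty}(1)<0$) and $F_{\infty}'(0)>0$ (giving the simple zero at $0$ and $P_{\infty}(0)=1-V_f/2\neq0$). From Euler's formula, with the ideal vertex four-valent and all others trivalent, $F=(V_f+6)/2$, so $|\Omega_2(\mathcal P_{\infty})|\ge5$ only yields $V_f\ge4$; the borderline case $V_f=4$ is the combinatorial square pyramid, whose faces at the apex are triangles. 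This is excluded because the ridge definition requires the two faces sharing the contracted edge to be at least quadrilaterals, so in fact $V_f\ge5$. Verifying this exclusion carefully — and hence the strict positivity underlying $P_{\infty}(1)<0$ — is the delicate part; the remainder is bookkeeping with Steinberg's and Parry's formulae.
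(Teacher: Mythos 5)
Your architecture is essentially the paper's: virgin-form cyclotomic denominator, zeros of $F_{\infty}$ at $t=0$ and $t=1$, and reduction of $P_{\infty}(1)<0$ to $F_{\infty}'(1)<0$ via $P_{\infty}(1)=Q_{\infty}(1)F_{\infty}'(1)$, which you cast in the closed form $F_{\infty}'(1)=\frac14-\frac12\sum_v\prod_i\frac{m_i}{m_i+1}$ (equivalent to the paper's $\frac{\mathrm{d}F_n}{\mathrm{d}t}(1)=\frac12+\sum_v\frac{\mathrm{d}g_v}{\mathrm{d}t}(1)$ combined with $\frac{\mathrm{d}F_n}{\mathrm{d}t}(1)-\frac{\mathrm{d}F_{\infty}}{\mathrm{d}t}(1)=\frac{1}{4n}$). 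But your final step --- excluding the borderline case $V_f=4$ --- is wrong, and it is precisely where the paper works hardest. You claim the square pyramid limit is ruled out ``because the ridge definition requires the two faces sharing the contracted edge to be at least quadrilaterals.'' That condition constrains the faces of $\mathcal{P}(n)$, not of $\mathcal{P}_{\infty}$: under contraction of $e$ each of the two faces containing $e$ loses an edge, so a quadrilateral becomes a triangle. Concretely, a triangular Coxeter prism with a vertical ridge of type $\langle2,2,n,2,2\rangle$ satisfies the ridge definition (the two faces sharing $e$ are lateral quadrilaterals), and contracting $e$ yields exactly the square pyramid with four-valent ideal apex and $V_f=4$. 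This case cannot be dismissed combinatorially: if all four base vertices had stabiliser $\Delta_{2,2,2}$, your sum would equal exactly $\frac12$ and $F_{\infty}'(1)=0$, i.e.\ $P_{\infty}(1)=0$; the strictness must come from geometry. The paper handles precisely this situation: for $|\Omega_0(\mathcal{P}(n))|=6$ (the prism) it invokes the Kaplinskaya--Vinberg classification of compact hyperbolic Coxeter prisms, lists those admitting a ridge (Fig.~\ref{prismcontraction1}--\ref{prismcontraction3}), and verifies $\frac{\mathrm{d}F_n}{\mathrm{d}t}(1)<0$ by direct computation of growth functions; it also treats $|\Omega_0(\mathcal{P}(n))|=8$ by noting not all vertices can be $\Delta_{2,2,2}$.

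A patch within your framework does exist: apply condition $\mathfrak{m}_5$ of Andreev's theorem to the pyramid $\mathcal{P}_{\infty}$ itself, taking $F_i,F_k$ to be opposite lateral faces (adjacent only at $v_{\infty}$) and $F_j$ the base; it forces $\alpha_{ij}+\alpha_{jk}<\pi$, so some base edge carries an angle $\le\frac{\pi}{3}$, some base vertex has stabiliser different from $\Delta_{2,2,2}$, and the sum strictly exceeds $\frac12$. With that repaired, your remaining steps do check out and are in places tidier than the paper's: you obtain $F_{\infty}(0)=0$ and $F_{\infty}(1)=0$ from Parry's formula plus the correction term of Proposition~\ref{GE_ideal} (the paper uses $\chi(G(\mathcal{P}_{\infty}))=0$ and cites the simple pole at infinity from the literature, where you instead compute $F_{\infty}'(0)=\frac{V_f}{2}-1=-P_{\infty}(0)$, which also gives $P_{\infty}(0)\neq0$ directly and the monicity of $P_{\infty}$ from $F_{\infty}(t)\to1$ as $t\to\infty$ --- a fact the paper leaves implicit though Lemma~\ref{pisot} needs it). Note also that $2|\Omega_1|=3V_f+4$ forces $V_f$ even, so after the pyramid case the bound $V_f\ge6$ gives $\sum_v\prod_i\frac{m_i}{m_i+1}\ge\frac34$ and no further case analysis is needed.
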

\begin{proof}
The denominator of $F_{\infty}(t)$ in its virgin form is a product of cyclotomic polynomials $\Phi_k(t)$ with $k\geq 2$.  By means of the equality $F_{\infty}(1) = \chi(G(\mathcal{P}_{\infty}))=0$ (see \cite{Serre}),
the numerator of $F_{\infty}(t)$ is divisible by $t-1$. Moreover, by \cite[Corollary 5.4.5]{CLS}, the growth function $f_{\infty}(t)$
for $G(\mathcal{P}_{\infty})$ has a simple pole at infinity. This means $F_{\infty}(t)$ has a simple zero at $t=0$, so that the numerator of $F_{\infty}(t)$ has the form $t(t-1)P_{\infty}(t)$, where $P_{\infty}(t)$ is a polynomial such that $P_{\infty}(0) \neq 0$. The desired equality $\mathrm{deg}\,Q_{\infty}(t) - \mathrm{deg}\,P_{\infty}(t) = 2$ follows from $f_{\infty}(0) = 1$.

The main part of the proof is to show that $P_{\infty}(1) < 0$. By the above, $\frac{\mathrm{d}F_{\infty}}{\mathrm{d}t}(1) = \frac{P_{\infty}(1)}{Q_{\infty}(1)}$ whose denominator is a product of cyclotomic polynomials $\Phi_k(t)$ with $k\geq 2$ evaluated at $t=1$. Hence $Q_{\infty}(1) > 0$, and it suffices to prove that $\frac{\mathrm{d}F_{\infty}}{\mathrm{d}t}(1)<0$.

Consider a sequence of combinatorially isomorphic compact polyhedra $\mathcal{P}(n)$ in $\mathbb H^3$
having a ridge of type $\langle2,2,n,2,2\rangle$ and converging to $\mathcal{P}_{\infty}$. By Proposition~\ref{GE_ideal},
\begin{equation*}
\frac{\mathrm{d}F_n}{\mathrm{d}t}(1) - \frac{\mathrm{d}F_{\infty}}{\mathrm{d}t}(1) = \frac{1}{4n}~.
\end{equation*}
In order to show $\frac{\mathrm{d}F_{\infty}}{\mathrm{d}t}(1)<0$, it is enough to
prove that $\frac{\mathrm{d}F_n}{\mathrm{d}t}(1)<0$
for $n$ large enough. To this end, we consider
the following identity which is a consequence of Theorem~\ref{GF_representation}, equations (\ref{eqParry1})-(\ref{eqParry2}):
\begin{equation*}
\frac{\mathrm{d}F_n}{\mathrm{d}t}(1) = \frac{1}{2} + \sum_{v \in \Omega_0(\mathcal{P}(n))}\frac{\mathrm{d}g_v}{\mathrm{d}t}(1)~.
\end{equation*}
In Table~\ref{tabular_for_GF_ideal} at the last pages, we list all possible values $\frac{\mathrm{d}g_v}{\mathrm{d}t}(1)$ depending on the subgroup $Stab(v)$ of $G(\mathcal{P}(n))$. It follows that $\frac{\mathrm{d}g_v}{\mathrm{d}t}(1) \leq -\frac{1}{16}$ for every $v\in\Omega_0(\mathcal{P}(n))$. Provided $|\Omega_0(\mathcal{P}(n))|\geq 10$, we obtain the estimate $\frac{\mathrm{d}F_n}{\mathrm{d}t}(1) \leq -\frac{1}{8}$. 

Consider the remaining cases $~5\le|\Omega_0(\mathcal{P}(n))|< 10$. By the simplicity of the polyhedron  $\mathcal{P}(n)$, we have that $2|\Omega_1(\mathcal{P}(n))| = 3|\Omega_0(\mathcal{P}(n))|$. Therefore $|\Omega_0(\mathcal{P}(n))|$ is an even number. Hence, the only cases consist of $|\Omega_0(\mathcal{P}(n))| = 8$, meaning that $\mathcal{P}(n)$ is either a combinatorial cube or a doubly truncated tetrahedron (see Fig.~\ref{polyhedra8vert}), and $|\Omega_0(\mathcal{P}(n))| = 6$, meaning that $\mathcal{P}(n)$ is a combinatorial triangular prism. 

\begin{figure}[ht]
\begin{center}
\includegraphics* [totalheight=3cm]{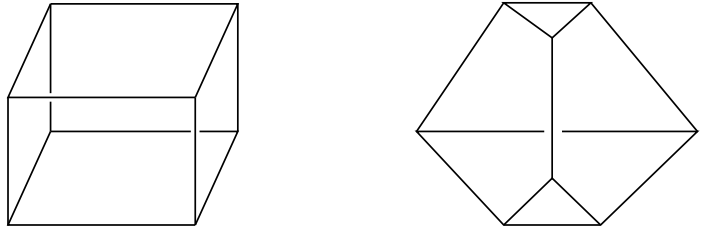}
\end{center}
\caption{Simple polyhedra with eight vertices} \label{polyhedra8vert}
\end{figure}

In the former case, not all the vertices of $\mathcal{P}(n)$ have their stabilizers isomorphic to $\Delta_{2,2,2}$, since $\mathcal{P}(n)$ is a non-Euclidean cube or a non-Euclidean tetrahedron with two ultra-ideal vertices. Then Table ~\ref{tabular_for_GF_ideal} provides the desired inequality $\frac{\mathrm{d}F_n}{\mathrm{d}t}(1) < 0$. The latter case requires a more detailed consideration. We use the list of hyperbolic Coxeter triangular prisms given by~\cite{Kaplinskaya, Vinberg1}\footnote{for how to read hyperbolic Coxeter diagrams, we refer to \cite[Ch.~5, \S 1.3]{Vinberg2}}. These prisms have one base orthogonal to all adjacent faces. More general Coxeter prisms arise by gluing the given ones along their orthogonal bases, if the respective planar angles coincide. 

Among all triangular Coxeter prisms, we depict in Fig.~\ref{prismcontraction1}-\ref{prismcontraction3} at the last pages of the paper only ones having a ridge of type $\langle2,2,n,2,2\rangle$. A routine computation of their growth functions allows to conclude $\frac{\mathrm{d}F_n}{\mathrm{d}t}(1)<0$.
\end{proof}

\begin{proposition}\label{GF_compact}
Let $\mathcal{P}(n)\subset\mathbb H^3$ be a compact Coxeter polyhedron with a ridge of type $\langle2,2,n,2,2\rangle$ for $n$ sufficiently large. Then the function $F_n(t)$ related to the group $G(\mathcal{P}(n))$ is given by
\begin{equation*}
F_n(t) = \frac{(t-1)P(t)}{(t^n-1)Q_{\infty}(t)},
\end{equation*}
where $Q_{\infty}(t)$ is the denominator polynomial associated with the deformed polyhedron $\mathcal{P}_{\infty}$
with a unique  four-valent ideal vertex from Proposition~\ref{GF_ideal}, and $P(t)$ is a product of cyclotomic polynomials and a Salem polynomial. In addition, $P(1) = 0$.
\end{proposition}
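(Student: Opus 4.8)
The plan is to read $F_n(t)$ off the data already assembled and then analyse the arithmetic of its numerator. First I would combine the difference formula from Proposition~\ref{GE_ideal}, namely $F_n(t) = F_\infty(t) + \frac{1}{t^n-1}\left(\frac{t-1}{t+1}\right)^2$, with the shape of $F_\infty$ from Proposition~\ref{GF_ideal}, $F_\infty(t) = \frac{t(t-1)P_\infty(t)}{Q_\infty(t)}$. Placing both summands over the common denominator $(t^n-1)(t+1)^2 Q_\infty(t)$ and pulling out the factor $t-1$ present in each numerator leads one to set
\[ P(t) = t\,P_\infty(t)\,(t^n-1) + \frac{(t-1)Q_\infty(t)}{(t+1)^2}. \]
The one point needing care is that $P(t)$ really is an integer polynomial, i.e. that $(t+1)^2 \mid Q_\infty(t)$. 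Since $Q_\infty$ is the virgin (least-common-multiple) denominator of Proposition~\ref{GF_ideal}, and the four right-angled edges meeting the ideal vertex each carry a finite stabiliser $D_2$ with growth polynomial $[2]^2 = (1+t)^2$, the factor $\Phi_2(t)^2 = (t+1)^2$ divides the least common multiple and the division is exact. This produces the asserted identity $F_n(t) = \frac{(t-1)P(t)}{(t^n-1)Q_\infty(t)}$.

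Next I would pin down the elementary features of $P(t)$. Monicity is immediate from $F_n(\infty) = 1/f_n(0) = 1/a_0 = 1$: comparing leading terms in $F_n = (t-1)P/((t^n-1)Q_\infty)$, and using that $t-1$, $t^n-1$ and the cyclotomic product $Q_\infty$ are monic, forces the leading coefficient of $P$ to be $1$; the same computation shows the numerator and denominator of $F_n$ have equal degree. For $P(1)=0$ I would invoke Theorem~\ref{reciprocity}: as $\dim = 3$ is odd, $f_n$, and hence $F_n(t)=1/f_n(t^{-1})$, is anti-reciprocal, so $F_n(t^{-1})=-F_n(t)$. Since $t-1$ and $t^n-1$ are anti-reciprocal while $Q_\infty$ is reciprocal, this symmetry transfers to $P$, giving $\tilde P(t) = -P(t)$; equivalently $F_n(1) = P(1)/(n\,Q_\infty(1)) = 0$ with $Q_\infty(1) > 0$, whence $P(1)=0$. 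In particular $P(t) = (t-1)R(t)$ with $R$ a monic reciprocal integer polynomial, whose zeros occur in pairs $\{\rho,\rho^{-1}\}$.

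It remains to identify the factorisation of $P$ into cyclotomic polynomials and a single Salem polynomial, which is the heart of the matter. By Theorem~\ref{GF_representation} the growth rate $\tau_n$ is a Salem number; since the poles of $f_n$ are the reciprocals of the zeros of $P$ together with $t=1$, the number $\tau_n$ is a zero of $P$, hence of $R$, so its minimal Salem polynomial $S_n$ divides $R$, and $\tau_n^{-1}$ is a zero as well by reciprocity. Applying Pringsheim's theorem to the nonnegative series $f_n(t)=\sum_k a_k t^k$ shows that $1/\tau_n$ is the dominant pole, so every zero $\rho$ of $P$ satisfies $|\rho|\le \tau_n$, with $\tau_n$ and $\tau_n^{-1}$ the only zeros off the unit circle. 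Consequently $R/S_n$ is a monic integer polynomial all of whose zeros lie on the unit circle, and Kronecker's theorem forces it to be a product of cyclotomic polynomials. Combining the pieces yields $P(t) = (t-1)\,S_n(t)\,C(t)$ with $C$ cyclotomic, as claimed.

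The main obstacle I anticipate is precisely this last step: establishing that $\tau_n,\tau_n^{-1}$ are the \emph{only} zeros of $P$ off the unit circle. Pringsheim's theorem bounds the moduli of the zeros but does not by itself exclude further reciprocal pairs with $1<|\rho|<\tau_n$; ruling these out is exactly the structural content of Parry's analysis of the growth denominator in $\mathbb{H}^3$ underlying Theorem~\ref{GF_representation}, on which I would lean. By comparison the divisibility $(t+1)^2 \mid Q_\infty$ is a genuine but minor point, settled by the local geometry at the four-valent ideal vertex as above.
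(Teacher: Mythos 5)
Your proposal is correct, but it reaches the representation by a genuinely different route than the paper. The paper works entirely inside Steinberg's formula: writing $F_n$ and $F_\infty$ over their virgin-form denominators, it notes that the sets $Fin_n$ and $Fin_\infty$ of finite-subgroup growth polynomials differ only by $f_e(t)=[2][n]$ and $f_{v_i}(t)=[2]^2[n]$, and that $[2]^2$ already lies in both sets; comparing least common multiples then gives $Q(t)=Q_\infty(t)\cdot[n]$, i.e.\ the denominator $(t^n-1)\,Q_\infty(t)/(t-1)$, with no appeal to Proposition~\ref{GE_ideal}. You instead combine the difference formula of Proposition~\ref{GE_ideal} with the shape of $F_\infty$ from Proposition~\ref{GF_ideal} and solve for the numerator, obtaining $P(t)=t\,P_\infty(t)(t^n-1)+(t-1)\,Q_\infty(t)/(t+1)^2$; your divisibility $(t+1)^2\mid Q_\infty(t)$, via the $D_2$ stabilisers of the right-angled edges at $v_\infty$, is exactly the fact the paper uses in the guise of $[2]^2\in Fin_n\cap Fin_\infty$, so the integrality of your $P$ is sound (and there is no circularity, since both cited propositions precede this one). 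In effect your computation anticipates equation~(\ref{eqpoly1}) and the closed form $P(t)=t^{n+1}P_\infty(t)-\widetilde P_\infty(t)$, which the paper only extracts later, in the proof of Theorem~\ref{alg_integers}; what the paper's route buys is an intrinsic identification of the virgin denominator of $F_n$ itself, while yours is more self-contained and makes the integer-polynomial issue explicit. On $P(1)=0$ you coincide with the paper ($F_n(1)=0$ against $\lim_{t\to 1}(t^n-1)/(t-1)=n$; your anti-reciprocity derivation of $\widetilde P=-P$ is a valid alternative and slightly stronger). For the factorisation of $P$ into cyclotomics and a Salem polynomial, the paper simply cites Theorem~\ref{GF_representation}, i.e.\ Parry; your Pringsheim--Kronecker sketch correctly isolates the one step it cannot deliver --- excluding reciprocal pairs of zeros with $1<|\rho|<\tau_n$ --- and then leans on the same source, so at that point you are exactly as rigorous as the paper, and your diagnosis of where Parry's analysis is indispensable is accurate.
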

\begin{proof}
Denote by $Fin_n:=\{f_\omega(t)\,|\,\omega \in \Omega_*(\mathcal{P}(n))\,\mbox{ such that }\,G(\omega)\, \mbox{is finite}\}$,
and by $Fin_{\infty}:= \{f_\omega(t)\,|\,\omega \in \Omega_*(\mathcal{P}_{\infty})\mbox{ such that }\,G(\omega)\,\mbox{is finite}\}$ where $*\in\{0,1,2\}$. Let $F_n(t) = \frac{P(t)}{Q(t)}$ be given in its virgin form, that means $Q(t)$ is the least common multiple of all polynomials in $Fin_n$. For the corresponding function $F_{\infty}(t)$,  Theorem~\ref{Steinberg_formula} implies that $Q_{\infty}(t)$ is the least common multiple of all polynomials in $Fin_{\infty}$. 

Denote by $e$ the edge of $\mathcal{P}(n)$ undergoing contraction, and let $v_1$, $v_2$ be its vertices. Then the growth function of $Stab(e) \cong D_{n}$ is $f_e(t) = [2][n]$, and the growth function of $Stab(v_i) \cong \Delta_{2,2,n}$ is $f_{v_i}(t) = [2]^2[n]$, $i=1, 2$. The sets $Fin_n$ and $Fin_{\infty}$ differ only by the elements $f_e(t)$, $f_{v_1}(t)$, $f_{v_2}(t)$. 
Furthermore, both sets contain the polynomial $[2]^2$, since the polyhedra $\mathcal{P}(n)$ and 
$\mathcal{P}_{\infty}$ have pairs of edges with right angles along them and stabilizer $D_2$. The comparison of the least common multiples for polynomials in $Fin_n$ and in $Fin_{\infty}$ shows that $Q(t) = Q_{\infty}(t) \cdot [n]$, as claimed.

The assertion $P(1)=0$ follows from the fact that $F_n(1) = 0$ while $\lim_{t\rightarrow 1}\frac{t^n-1}{t-1} = n$. Finally, the polynomial $P(t)$ is a product of cyclotomic polynomials and a Salem polynomial by Theorem~\ref{GF_representation}.
\end{proof}

\begin{theorem}\label{alg_integers}
Let $\mathcal{P}(n)\subset \mathbb H^3$ be a compact Coxeter polyhedron with a ridge $e$ of type $\langle2,2,n,2,2\rangle$ for sufficiently large $n$. Denote by $\mathcal{P}_{\infty}$ the polyhedron arising by contraction of the ridge $e$. Let $\tau_n$ and $\tau_\infty$ be the growth rates of $G(\mathcal{P}(n))$ and $G(\mathcal{P}_\infty)$, respectively. Then $\tau_n < \tau_\infty$ for all $n$, and $\tau_n \rightarrow \tau_\infty$ as $n \rightarrow \infty$. Furthermore,  $\tau_{\infty}$ is a Pisot number.
\end{theorem}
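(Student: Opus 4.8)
The plan is to handle the two order relations at once and then to reduce the Pisot assertion to W.~Floyd's criterion, Lemma~\ref{pisot}, applied to the polynomial $P_\infty(t)$ furnished by Proposition~\ref{GF_ideal}. The inequalities $\tau_n<\tau_\infty$ and the convergence $\tau_n\to\tau_\infty$ require nothing new: they are precisely the second assertion of Proposition~\ref{GE_ideal}. Hence the entire content of the theorem lies in proving that $\tau_\infty$ is a Pisot number, and this is where Lemma~\ref{pisot} enters, with $P:=P_\infty$.

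First I would verify the easy hypotheses of Lemma~\ref{pisot} for $P_\infty$. Proposition~\ref{GF_ideal} already gives $P_\infty(0)\neq0$ and $P_\infty(1)<0$. Since $F_\infty(t)\to1$ as $t\to\infty$ (because $f_\infty(0)=1$) while $Q_\infty$ is a monic product of cyclotomic polynomials, comparison of leading coefficients in $F_\infty(t)=t(t-1)P_\infty(t)/Q_\infty(t)$ shows that $P_\infty$ is monic with integer coefficients. It then remains to exhibit, for every large $m$, a factorization of $\big(t^mP_\infty(t)-\tilde P_\infty(t)\big)/(t-1)$ into cyclotomic polynomials times a Salem polynomial, and separately to check that $P_\infty$ is not reciprocal.

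The decisive step, and the one I expect to be the main obstacle, is the polynomial identity
$$P(t)=t^{n+1}P_\infty(t)-\tilde P_\infty(t),$$
where $P(t)$ is the numerator polynomial of Proposition~\ref{GF_compact}. To establish it I would start from the difference formula of Proposition~\ref{GE_ideal} in the form $F_n(t)-F_\infty(t)=(t-1)^2/\big((t^n-1)(t+1)^2\big)$. Writing $Q_\infty(t)=(t+1)^2R(t)$ with $R$ cyclotomic (the factor $(t+1)^2=[2]^2$ is present because $\mathcal P_\infty$ has right-angled edges with stabilizer $D_2$) and inserting the explicit shapes of $F_n$ and $F_\infty$, one obtains $P(t)=t^{n+1}P_\infty(t)-tP_\infty(t)+(t-1)R(t)$. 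Next I would use the anti-reciprocity of $F_n$ (Theorem~\ref{reciprocity}, valid since $\mathcal P(n)$ is compact), which amounts to $\tilde P(t)=-P(t)$; computing $\tilde P$ from the expression just found and equating it with $-P$ yields $(t^n-1)\big(tP_\infty(t)-\tilde P_\infty(t)-(t-1)R(t)\big)=0$, hence the $n$-independent relation $\tilde P_\infty(t)=tP_\infty(t)-(t-1)R(t)$. Substituting $(t-1)R=tP_\infty-\tilde P_\infty$ back into the expression for $P$ gives exactly the displayed identity. Therefore, for $m=n+1$, $\big(t^mP_\infty-\tilde P_\infty\big)/(t-1)=P/(t-1)$, and since $P(1)=0$ so that $(t-1)\mid P$ and $P/(t-1)$ is a product of cyclotomic polynomials and a Salem polynomial by Proposition~\ref{GF_compact}, the main hypothesis of Lemma~\ref{pisot} holds for all sufficiently large $m$.

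Finally I would rule out $P_\infty$ being reciprocal: if $\tilde P_\infty=P_\infty$, the relation above forces $R=P_\infty$, whence $F_\infty(t)=t(t-1)/(t+1)^2$ and $f_\infty(t)=(1+t)^2/(1-t)$, a function of growth rate $1$, contradicting $\tau_\infty>1$. Lemma~\ref{pisot} then shows that $P_\infty$ is a product of cyclotomic polynomials and a Pisot polynomial. To close, I would identify $\tau_\infty$ with the Pisot root: it is the reciprocal of the least positive pole of $f_\infty$, equivalently the largest real zero of $F_\infty$, and since the roots of $Q_\infty$ and of the cyclotomic factors of $P_\infty$ lie on the unit circle, this zero exceeding $1$ must be the Pisot root of $P_\infty$. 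Hence $\tau_\infty$ is a Pisot number. The genuinely delicate points are the degree-and-sign bookkeeping in the reciprocal-polynomial computation leading to the displayed identity, and confirming that the Salem factor of $P/(t-1)$ is the one carrying the growth rate $\tau_n$.
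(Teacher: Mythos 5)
Your proposal is correct and follows essentially the same route as the paper: the inequality and convergence are quoted from Proposition~\ref{GE_ideal}, the key identity $P(t)=t^{n+1}P_{\infty}(t)-\widetilde{P}_{\infty}(t)$ is extracted from the difference formula of Proposition~\ref{GE_ideal} together with the anti-reciprocity of $F_n$ (Theorem~\ref{reciprocity}), and Lemma~\ref{pisot} is applied via the factorization of $P(t)$ from Proposition~\ref{GF_compact} with $m=n+1$ ranging over all large integers. The differences are only in arrangement -- you solve the difference equation for $P$ and impose $\widetilde{P}=-P$ to derive the intermediate relation $\widetilde{P}_{\infty}(t)=tP_{\infty}(t)-(t-1)R(t)$, whereas the paper substitutes the ansatz $t^{n+1}P_{\infty}-\widetilde{P}_{\infty}$ into equation~(\ref{eqpoly1}) and reduces it to $F_n(t)+F_n(t^{-1})=0$ -- and you additionally verify hypotheses of Lemma~\ref{pisot} (monicity and non-reciprocity of $P_{\infty}$, identification of $\tau_{\infty}$ with the Pisot root) that the paper leaves implicit, all correctly.
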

\begin{proof}
The first assertion follows easily from Proposition~\ref{GE_ideal}. We prove that  $\tau_{\infty}$ is a Pisot number by using some number-theoretical properties of growth rates. Consider the growth functions $f_n(t)$ and $f_{\infty}(t)$ of $G(\mathcal{P}(n))$ and $G(\mathcal{P}_{\infty})$, respectively,
together with associated functions $F_n(t) = \frac{1}{f_n(t^{-1})}$ and $F_{\infty}(t)=\frac{1}{f_{\infty}(t^{-1})}$. Then the growth rates $\tau_n$ and $ \tau_\infty$ are the least positive zeros in the interval $(1, +\infty)$ of the functions $F_n(t)$ and $F_{\infty}(t)$.
 
By using Propositions~\ref{GE_ideal}, \ref{GF_ideal} and \ref{GF_compact} in order to represent
the numerator and denominator polynomials of $F_n(t)$ and $F_{\infty}(t)$, one easily derives the equation
\begin{equation}\label{eqpoly1}
\frac{(t-1)P(t)}{(t^n-1)Q_{\infty}(t)} - \frac{t(t-1)P_{\infty}(t)}{Q_{\infty}(t)} = \frac{1}{t^n-1} \left(\frac{t-1}{t+1}\right)^2~.
\end{equation}
For the polynomial $P(t)$, 
we prove that
\begin{equation}\label{eqpoly2}
P(t) = t^{n+1} P_{\infty}(t) - \widetilde{P}_{\infty}(t)
\end{equation}
is a solution to (\ref{eqpoly1}), where $\widetilde{P}_{\infty}(t)$ denotes the reciprocal polynomial of $P_{\infty}(t)$, that is, $\widetilde{P}_{\infty}(t) = t^{\mathrm{deg}\,P_{\infty}(t)} P_{\infty}(t^{-1})$. Since $Q_{\infty}(t)$ is a product of cyclotomic polynomials $\Phi_k(t)$ with $k\geq 2$, one has $Q_{\infty}(t) = \widetilde{Q}_{\infty}(t)=t^{\mathrm{deg}\,Q_{\infty}(t)} Q_{\infty}(t^{-1})$.

\smallskip
Now, replace $P(t)$ in (\ref{eqpoly1}) by its expression from (\ref{eqpoly2})
and simplify each term. This yields
\begin{equation*}
\frac{t(t-1)P_{\infty}(t)}{Q_{\infty}(t)} - \frac{(t-1)\widetilde{P}_{\infty}(t)}{Q_{\infty}(t)} = \left(\frac{t-1}{t+1}\right)^2.
\end{equation*}
By replacing the reciprocal polynomials and by using the fact of Proposition \ref{GF_ideal}, saying
that $\mathrm{deg}\,Q_{\infty}(t) - \mathrm{deg}\,P_{\infty}(t) = 2$, we obtain
\begin{equation}\label{eqpoly3}
\frac{t(t-1)P_{\infty}(t)}{Q_{\infty}(t)} + \frac{t^{-1}(t^{-1}-1)P_{\infty}(t^{-1})}{Q_{\infty}(t^{-1})} = \left(\frac{t-1}{t+1}\right)^2.
\end{equation}
The identity for $F_{\infty}(t)$ as described by Proposition~\ref{GF_ideal} transforms the equation (\ref{eqpoly3}) into $F_{\infty}(t) + F_{\infty}(t^{-1}) = \left(\frac{t-1}{t+1}\right)^2$. Then Proposition~\ref{GE_ideal} provides the equivalent identity $F_n(t)+F_n(t^{-1}) = 0$, which is true by the anti-reciprocity of $F_n(t)$ (see Theorem~\ref{reciprocity}). 

\smallskip
As a consequence,  the relation $P(t) = t^{n+1}P_{\infty}(t) - \widetilde{P}_{\infty}(t)$ holds for $n$ large enough. Since we already know that $P(t)$ is a product of cyclotomic polynomials and a Salem polynomial, Lemma~\ref{pisot} implies that $P_{\infty}(t)$ is a product of cyclotomic polynomials and a Pisot polynomial. Hence, the growth rate $\tau_{\infty}$ is a Pisot number.
\end{proof}

\section{Some final remarks}

\subsection{Deforming L\"{o}bell polyhedra}

The family of L\"{o}bell polyhedra $L(n)$, $n \geq 5$ is described in \cite{Vesnin}. Contracting an edge of $L(5)$, a combinatorial dodecahedron, one obtains the smallest $3$-dimensional right-angled polyhedron with a single ideal four-valent vertex. Contracting all the vertical edges of $L(n)$ as shown in Fig.~\ref{loebell_n} one obtains an ideal right-angled polyhedron $L_{\infty}(n)$, $n\geq 5$. 

\begin{figure}[ht]
\begin{center}
\includegraphics* [totalheight=4cm]{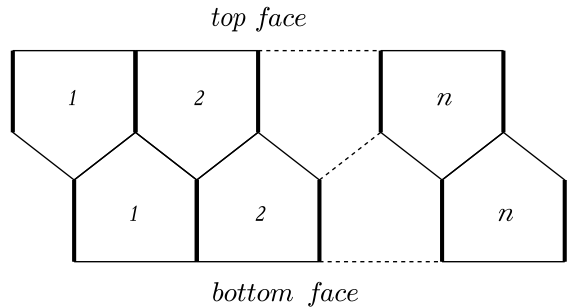}
\end{center}
\caption{L\"{o}bell polyhedron $L(n)$, $n \geq 5$ with one of its perfect matchings marked with thickened edges. Left- and right-hand side edges are identified} \label{loebell_n}
\end{figure}

Note, that contracted edges form a perfect matching of $L(n)$ considered as a three-valent graph. The analogous ideal right-angled polyhedra $L_{\infty}(4)$ and $L_{\infty}(3)$ also exist. Observe that $L_{\infty}(3)$ is a combinatorial octahedron. The growth rate of $L_{\infty}(n)$, $n\geq 3$, belongs to the $(2n)$-th derived set of Salem numbers by Propositions \ref{four_valent_vertex} and \ref{GE_ideal}.

\subsection{Deforming a Lambert cube}

Contracting essential edges of a Lambert cube, one obtains a right-angled polyhedron $\mathcal{R}$. This polyhedron could also be obtained from the Lanner tetrahedron $[3,4,4]$ by means of construction described in \cite{VinbergPotyagailo}. The polyhedron $\mathcal{R}$ is known to have the minimal number of faces among all the right-angled three-dimensional hyperbolic polyhedra of finite volume \cite{ERT}.

\subsection{Finite volume Coxeter polyhedra with an ideal three-valent vertex}

Consider the dodecahedron $\mathcal{D}$ in Fig.~\ref{dodecahedronideal}. It has all but three right dihedral angles. The remaining ones, along the edges incident to a single ideal three-valent vertex, equal $\frac{\pi}{3}$. 

\begin{figure}[ht]
\begin{center}
\includegraphics* [totalheight=4cm]{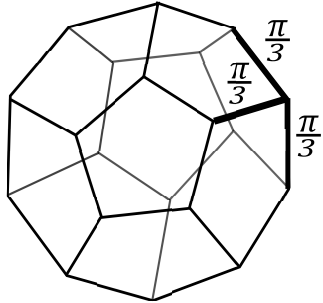}
\end{center}
\caption{The dodecahedron $\mathcal{D}$ with one ideal three-valent vertex} \label{dodecahedronideal}
\end{figure}

The growth function of the corresponding Coxeter group is given by 
\begin{equation*}
f(t) = \frac{(1+t)^3(1+t+t^2)}{9t^4-2t^2-8t+1}=:\frac{Q(t)}{(t-1)\,P(t)}\,,
\end{equation*}
where the polynomial $P(t)$ has integer coefficients. Its reciprocal $\tilde{P}(t)$ is the minimal polynomial of the corresponding growth rate $\tau$. More precisely, $\tilde{P}(t) = 9+9t+7t^2-t^3$ with roots $\tau \approx 8.2269405$ and $\varsigma_{1} = \overline{\varsigma_{2}} \approx -0.6134703+0.8471252 i$. Since $\varsigma_{1}\varsigma_{2} \approx 1.0939668 > 1$, the growth rate $\tau$ of the group $G(\mathcal{D})$ is neither a Salem number, nor a Pisot number.

\bigskip
{\it
Alexander Kolpakov

Department of Mathematics

University of Fribourg

chemin du Mus\'ee 23

CH-1700 Fribourg, Switzerland

aleksandr.kolpakov(at)unifr.ch}

\newpage

\begin{table}[ht]
\begin{center}
\begin{tabular}{|c|c|c|}
\hline & &\\  
Type of $H_1$&  Type of $H_2$&  $\frac{1}{f_1(t)} - \frac{1}{f_2(t)} = $\\[2 ex]
\hline & &\\ 
$\langle2, 2, n, 2, 2\rangle$,&  $\langle2, 2, n+1, 2, 2\rangle$,&  $\frac{t^n(1-t)^3}{(1-t^n)(1-t^{n+1})(1+t)^2}$,\\[2 ex] 
$n\geq 2$& $n\geq 2$& $n\geq 2$\\
\hline & &\\
$\langle2, 2, 2, 2, 3\rangle$&  $\langle2, 2, 3, 2, 3\rangle$&  $\frac{t^2(1-t)}{(1+t)^3(1+t^2)}$\\[2 ex]
\hline & &\\
$\langle2, 2, 3, 2, 3\rangle$&  $\langle2, 2, 4, 2, 3\rangle$&  $\frac{t^3(1-t)}{(1+t)^3(1-t+t^2)(1+t+t^2)}$\\[2 ex]
\hline & &\\
$\langle2, 2, 4, 2, 3\rangle$&  $\langle2, 2, 5, 2, 3\rangle$&  $\frac{t^4(1-t)(1-t+t^2)(1+t+t^2)}{(1+t)^3(1+t^2)(1-t+t^2-t^3+t^4)(1+t+t^2+t^3+t^4)}$\\[2 ex] 
\hline & &\\ 
$\langle2, 2, 2, 2, 4\rangle$&  $\langle2, 2, 3, 2, 4\rangle$&  $\frac{t^2(1-t)(1+t^2)}{(1+t)^3(1-t+t^2)(1+t+t^2)}$\\[2 ex]
\hline & &\\ 
$\langle2, 2, 2, 2, 5\rangle$&  $\langle2, 2, 3, 2, 5\rangle$&  $\frac{t^2(1-t)(1+2 t^2+t^3+2 t^4+t^5+2 t^6+t^7+2 t^8+t^{10})}{(1+t)^3 (1+2 t^2+3 t^4+3 t^6+3 t^8+2 t^{10}+t^{12})}$\\[2 ex]
\hline & &\\ 
$\langle2, 3, 2, 2, 3\rangle$&  $\langle2, 3, 3, 2, 3\rangle$&  $\frac{t^2(1-t)}{(1+t)(1+t^2)(1+t+t^2)}$\\[2 ex]
\hline & &\\ 
$\langle2, 3, 3, 2, 3\rangle$&  $\langle2, 3, 4, 2, 3\rangle$&  $\frac{t^3(1-t)}{(1+t)^3(1+t^2)(1-t+t^2)}$\\[2 ex]
\hline & &\\ 
$\langle2, 3, 4, 2, 3\rangle$&  $\langle2, 3, 5, 2, 3\rangle$&  $\frac{t^4(1-t)}{(1+t)^3(1+t^2)(1-t+t^2-t^3+t^4)}$\\[2 ex] 
\hline & &\\ 
$\langle2, 3, 2, 2, 4\rangle$&  $\langle2, 3, 3, 2, 4\rangle$&  $\frac{t^2(1-t)(1+t+t^2+t^3+t^4)}{(1+t)^3(1+t^2)(1-t+t^2)(1+t+t^2)}$\\[2 ex]
\hline 
\end{tabular} 
\end{center}
\caption{Table for Proposition \ref{GE_growth}} \label{tabular_for_GE_growth1}
\end{table}

\begin{table}[ht]
\begin{center}
\begin{tabular}{|c|c|c|}
\hline & &\\  
Type of $H_1$&  Type of $H_2$&  $\frac{1}{f_1(t)} - \frac{1}{f_2(t)} = $\\[2 ex]
\hline & &\\ 
$\langle2, 3, 2, 2, 5\rangle$&  $\langle2, 3, 3, 2, 5\rangle$&  $\frac{t^2(1-t)(1+t^2)(1+t^3+t^6)}{(1+t)^3(1+3 t^2+5 t^4+6 t^6+6 t^8+5 t^{10}+3 t^{12}+t^{14})}$\\[2 ex]
\hline & &\\ 
$\langle2, 4, 2, 2, 4\rangle$&  $\langle2, 4, 3, 2, 4\rangle$&  $\frac{t^2(1-t)}{(1+t)^3(1-t+t^2)}$\\[2 ex]
\hline & &\\ 
$\langle2, 4, 2, 2, 5\rangle$&  $\langle2, 4, 3, 2, 5\rangle$&  $\frac{t^2(1-t)(1+t^2)(1+t^3+t^6)}{(1+t)^3 (1-t+t^2) (1-t+t^2-t^3+t^4) (1+t+t^2+t^3+t^4)}$\\[2 ex]
\hline & &\\ 
$\langle2, 5, 2, 2, 5\rangle$&  $\langle2, 5, 3, 2, 5\rangle$&  $\frac{(1-t)t^2(1+t^2+2 t^3-t^4+2 t^5+t^6+t^8)}{(1+t)^3(1-t+t^2)(1-t+t^2-t^3+t^4)(1+t+t^2+t^3+t^4)}$\\[2 ex]
\hline 
\end{tabular} 
\end{center}
\caption{Table for Proposition \ref{GE_growth} (continuation)} \label{tabular_for_GE_growth2}
\end{table}

\begin{table}[ht]
\begin{center}
\begin{tabular}{|c|c|c|c|c|}
\hline \multirow{2}{*}{Vertex group $Stab(v)$}& \multicolumn{3}{|c|}{Its Coxeter exponents}&  \multirow{2}{*}{Quantity $\frac{\mathrm{d}g_v}{\mathrm{d}t}(1)$}\\
\cline{2-4} & $m_1$&  $m_2$&  $m_3$&  \\ 
\hline  $\Delta_{2,2,n}$, $n\geq 2$& 1&  1&  $n-1$&  $-\frac{1}{8}\left(1-\frac{1}{n}\right)$\\
\hline  $\Delta_{2,3,3}$&  1&  2&  3&  $-\frac{1}{8}$\\ 
\hline  $\Delta_{2,3,4}$&  1&  3&  5&  $-\frac{5}{32}$\\ 
\hline  $\Delta_{2,3,5}$&  1&  5&  9&  $-\frac{3}{16}$\\ 
\hline 
\end{tabular} 
\end{center}
\caption{Table for Proposition \ref{GF_ideal}. The column on the right follows from Theorem~\ref{GF_representation}, formula (\ref{eqParry2})} \label{tabular_for_GF_ideal}
\end{table}

\begin{figure}[ht]
\begin{center}
\includegraphics* [totalheight=7cm]{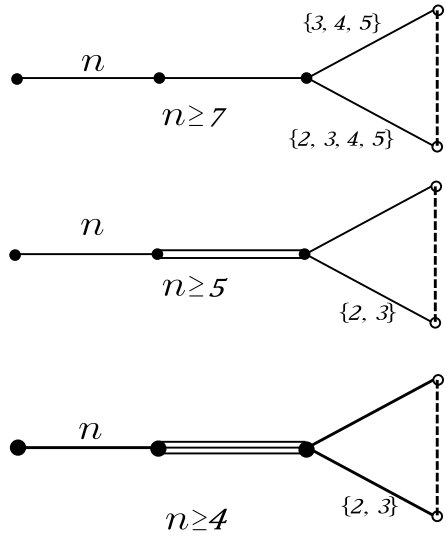}
\end{center}
\caption{Prisms that admit contraction of an edge: the first picture} \label{prismcontraction1}
\end{figure}

\begin{figure}[ht]
\begin{center}
\includegraphics* [totalheight=3cm]{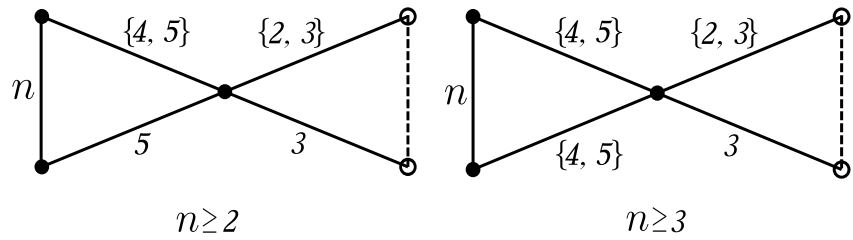}
\end{center}
\caption{Prisms that admit contraction of an edge: the second picture} \label{prismcontraction2}
\end{figure}

\begin{figure}[ht]
\begin{center}
\includegraphics* [totalheight=6cm]{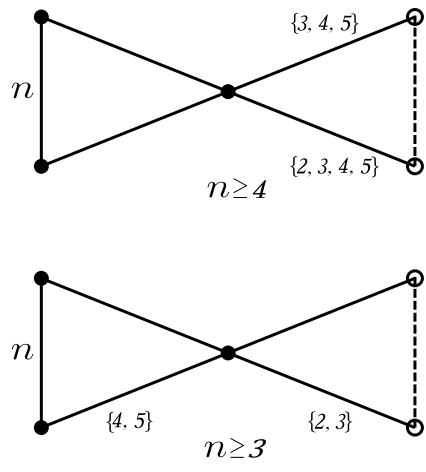}
\end{center}
\caption{Prisms that admit contraction of an edge: the third picture} \label{prismcontraction3}
\end{figure}

\end{document}